\documentclass[a4paper,oneside,11pt]{article}

\usepackage{amsmath,amsfonts,amscd,amssymb}
\usepackage{longtable,geometry}
\usepackage[english]{babel}
\usepackage[utf8]{inputenc}
\usepackage[active]{srcltx}
\usepackage[T1]{fontenc}
\usepackage{graphicx}
\usepackage{pstricks}
\usepackage{bbm}
\usepackage{MnSymbol}

\geometry{dvips,a4paper,margin=1.4in}

\newtheorem{theorem}{Theorem}[section]

\newtheorem{corollary}[theorem]{Corollary}
\newtheorem{lemma}[theorem]{Lemma}
\newtheorem{proposition}[theorem]{Proposition}
\newtheorem{definition}[theorem]{Definition}
\newtheorem{example}[theorem]{Example}

\newtheorem{remark}[theorem]{Remark}

\newcommand{\be}[1]{\begin{equation}\label{#1}}
\newcommand{\ee}{\end{equation}}
\numberwithin{equation}{section}

\newenvironment{proof}[1][\relax]
  {\paragraph{Proof\ifx#1\relax\else~of #1\fi}}%
  {~\hfill$\square$\par\bigskip}
  {\paragraph{Proof\ifx#1\relax\else~de #1\fi}}%
  {~\hfill$\square$\par\bigskip}


\newcommand{\calH}{\mathcal{H}}
\newcommand{\calI}{\mathcal{I}}



\newcommand{\bbC}{\mathbb{C}}

\newcommand{\bbN}{\mathbb{N}}

\newcommand{\bbR}{\mathbb{R}}

\newcommand{\bbZ}{\mathbb{Z}}


\renewcommand{\be}{\beta}


\renewcommand{\Re}{\textrm{Re}}


\newcommand{\rk}[1]{\bgroup\color{red}%
  \par\medskip\hrule\smallskip%
  \noindent\textbf{#1}%
  \par\smallskip\hrule\medskip\egroup}

\title{The bundle Laplacian on discrete tori}
\author{Fabien Friedli%
	\thanks{The author was supported in part by the Swiss NSF grant 200021 132528/1.%
	}}

\begin{document}
\maketitle

\begin{abstract}
We prove an asymptotic formula for the determinant of the bundle Laplacian on discrete $d$-dimensional tori as the number of vertices tends to infinity. This determinant has a combinatorial interpretation in terms of cycle-rooted spanning forests. We also establish a relation (in the limit) between the spectral zeta function of a line bundle over a discrete torus, the spectral zeta function of the infinite graph $\bbZ^d$ and the Epstein-Hurwitz zeta function. The latter can be viewed as the spectral zeta function of the twisted continuous torus which is the limit of the sequence of discrete tori.
\end{abstract}

\emph{Keywords}: bundle Laplacian, heat kernel, cycle-rooted spanning forest, spectral zeta function, Kronecker limit formula

\begin{section}{Introduction}
	
The number of spanning trees in a graph is an important quantity in combinatorics, probability, statistical physics and other fields, and has been studied extensively. The main tool used to count spanning trees is the matrix-tree theorem by Kirchhoff, which relates their number to the determinant of the combinatorial Laplacian. Thus this combinatorial problem can be translated into a spectral one. In \cite{Ken11} Kenyon develops the theory of the \emph{vector bundle Laplacian}, first studied by Forman \cite{For93}, in order, among other things, to obtain results on the loop-erased random walk on lattices (see also his paper \cite{Ken15}). There is an analog of the matrix-tree theorem in this setting, relating the determinant of the bundle Laplacian to cycle-rooted spanning forests, see Section 2.\\
In statistical physics in particular, it is often interesting to look at sequences of graphs whose number of vertices go to infinity and to relate the combinatorics of such sequences to continuous objects in the limit. If the graphs are discrete tori and we are interested in the number of spanning trees, this was carried out, in all dimensions, by Chinta, Jorgenson and Karlsson in \cite{CJK10}. They show in particular that the constant term in the asymptotics is the regularized determinant of the continuous torus. We refer to \cite{Res15} for an explanation of what is expected to hold in general (for other graphs) and relations with quantum field theory.\\
In the present paper we use the ideas of \cite{CJK10} to establish an asymptotic formula for the determinant of the bundle Laplacian on discrete tori when the number of vertices goes to infinity.\\
Let $G_n$ be the line bundle over the discrete torus defined in Section 2 and write $\Delta$ for the bundle Laplacian on $G_n$. Suppose that $\lambda_i\in[0,1]$ for each $i\in\{1,\ldots,d\}$ and that $\lambda_i\notin \{0,1\}$ for some index $i$. Our main result is
\begin{theorem}\label{thm11}
For an integer $d\geq 1$ write $$c_d=-\int_0^\infty\Big(e^{-2dt}I_0(2t)^d-e^{-t}\Big)\frac{dt}{t}$$ and, for $\Re(s)>\frac{d}{2}$, $$\zeta_{EH}(s;\alpha_1,\ldots,\alpha_d;\lambda_1,\ldots,\lambda_d)=(2\pi)^{-2s}\sum_{K\in\bbZ^d}\Big(\Big(\frac{k_1+\lambda_1}{\alpha_1}\Big)^2+\ldots+\Big(\frac{k_d+\lambda_d}{\alpha_d}\Big)^2\Big)^{-s}.$$ Then, as $n\longrightarrow\infty$, $$\log\det\Delta=\Big(\prod_{i=1}^{d}a_i(n)\Big)c_d-\zeta_{EH}'(0;\alpha_1,\ldots,\alpha_d;\lambda_1,\ldots,\lambda_d)+o(1).$$
\end{theorem}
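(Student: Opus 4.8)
The plan is to adapt the heat-kernel and theta-inversion method of \cite{CJK10} to the twisted setting. Since $\lambda_i\notin\{0,1\}$ for some $i$, the Laplacian $\Delta$ has trivial kernel, every eigenvalue
\[\mu_K=\sum_{j=1}^d\Bigl(2-2\cos\tfrac{2\pi(k_j+\lambda_j)}{a_j}\Bigr),\qquad k_j\in\{0,\dots,a_j-1\},\]
is strictly positive, and $\log\det\Delta=-\zeta_{G_n}'(0)$ where $\zeta_{G_n}(s)=\sum_K\mu_K^{-s}=\frac{1}{\Gamma(s)}\int_0^\infty\theta_{G_n}(t)\,t^{s-1}\,dt$ for $\Re(s)>0$, with heat trace $\theta_{G_n}(t)=\sum_K e^{-t\mu_K}$. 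First I would record the dual Bessel expansion of this trace: from $e^{2t\cos\phi}=\sum_{n\in\bbZ}I_n(2t)e^{in\phi}$ and $\sum_{k=0}^{a-1}e^{2\pi ink/a}=a\,\mathbbm{1}_{a\mid n}$ one obtains
\[\theta_{G_n}(t)=\Bigl(\prod_{j=1}^d a_j\Bigr)e^{-2dt}\prod_{j=1}^d\Bigl(\sum_{w\in\bbZ}e^{2\pi iw\lambda_j}I_{wa_j}(2t)\Bigr).\]
Isolating the term where all $w=0$ splits $\theta_{G_n}=\theta^{\mathrm{lat}}+\theta^{\mathrm{err}}$ with $\theta^{\mathrm{lat}}(t)=\bigl(\prod_j a_j\bigr)e^{-2dt}I_0(2t)^d$, hence $\zeta_{G_n}=\zeta^{\mathrm{lat}}+\zeta^{\mathrm{err}}$, and I would treat the two pieces separately.

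For the lattice piece, $\theta^{\mathrm{lat}}(0)=\prod_j a_j$ and $\theta^{\mathrm{lat}}(t)\sim\bigl(\prod_j a_j\bigr)(4\pi t)^{-d/2}$ as $t\to\infty$, so a routine Mellin analysis (split the integral at $t=1$ and use $1/\Gamma(s)=s+\gamma s^2+\cdots$) yields
\[(\zeta^{\mathrm{lat}})'(0)=\Bigl(\prod_j a_j\Bigr)\Bigl[\gamma+\int_0^1\bigl(e^{-2dt}I_0(2t)^d-1\bigr)\tfrac{dt}{t}+\int_1^\infty e^{-2dt}I_0(2t)^d\,\tfrac{dt}{t}\Bigr].\]
Invoking the classical representation $\gamma=\int_0^1\frac{1-e^{-t}}{t}\,dt-\int_1^\infty\frac{e^{-t}}{t}\,dt$, the bracket collapses to $-c_d$, giving $-(\zeta^{\mathrm{lat}})'(0)=\bigl(\prod_j a_j\bigr)c_d$, the leading term of the theorem. (Equivalently, by Frullani, $c_d=\frac{1}{(2\pi)^d}\int_{[0,2\pi]^d}\log\bigl(\sum_j(2-2\cos\phi_j)\bigr)\,d\phi$, the log-determinant density of $\bbZ^d$.)

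For the error piece, the orders $wa_j$ with $w\neq0$ make $\theta^{\mathrm{err}}$ super-exponentially small as $t\to0$, while $\theta^{\mathrm{err}}(t)\sim-\bigl(\prod_j a_j\bigr)(4\pi t)^{-d/2}$ as $t\to\infty$; its Mellin transform is therefore holomorphic near $s=0$ and the factor $1/\Gamma(s)$ forces $-(\zeta^{\mathrm{err}})'(0)=-\int_0^\infty\theta^{\mathrm{err}}(t)\,\frac{dt}{t}$. Writing the side lengths as $a_i(n)=\alpha_i\rho_n(1+o(1))$ with overall scale $\rho_n\to\infty$ and substituting $t=\rho_n^2\tau$, I would establish the pointwise limit
\[\theta^{\mathrm{err}}(\rho_n^2\tau)\longrightarrow\Theta(\tau)-\frac{\prod_j\alpha_j}{(4\pi\tau)^{d/2}},\qquad \Theta(\tau)=\sum_{K\in\bbZ^d}e^{-\tau\Lambda_K},\ \ \Lambda_K=\sum_j\Bigl(\tfrac{2\pi(k_j+\lambda_j)}{\alpha_j}\Bigr)^2,\]
using $\rho_n^2\mu_K\to\Lambda_K$ for fixed $K$ (the bottom of the discrete spectrum converging to the twisted continuous-torus spectrum) together with $e^{-2u}I_0(2u)\sim(4\pi u)^{-1/2}$, which turns $\theta^{\mathrm{lat}}(\rho_n^2\tau)$ into the small-time leading term $\prod_j\alpha_j\,(4\pi\tau)^{-d/2}$ of the continuous heat trace $\Theta$. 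A standard Mellin/Poisson computation — in which the simple pole of $\Gamma(s)\zeta_{EH}(s)$ at $s=d/2$ is matched against the large-$\tau$ tail — then identifies
\[\int_0^\infty\Bigl(\Theta(\tau)-\frac{\prod_j\alpha_j}{(4\pi\tau)^{d/2}}\Bigr)\frac{d\tau}{\tau}=\zeta_{EH}'(0;\alpha_1,\dots,\alpha_d;\lambda_1,\dots,\lambda_d).\]
Combining the three pieces gives $\log\det\Delta=\bigl(\prod_j a_j\bigr)c_d-\zeta_{EH}'(0)+o(1)$.

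The hard part is the last step: upgrading the pointwise convergence $\theta^{\mathrm{err}}(\rho_n^2\tau)\to\Theta(\tau)-\prod_j\alpha_j\,(4\pi\tau)^{-d/2}$ to convergence of the integral $\int_0^\infty\theta^{\mathrm{err}}(\rho_n^2\tau)\,\frac{d\tau}{\tau}$. This demands a dominating function integrable against $d\tau/\tau$ uniformly in $n$, and hence sharp uniform estimates for $e^{-2u}I_\nu(2u)$ in both regimes — $\nu$ bounded with $u\to\infty$, and $\nu,u\to\infty$ comparably (Debye-type asymptotics) — so as to bound the small-$\tau$ discrepancy between the discrete trace and the lattice main term and to control the large-$\tau$ tail, where the discreteness of the spectrum governs the decay. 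I expect these uniform Bessel bounds, rather than the algebraic identifications above, to be the technical core of the argument.
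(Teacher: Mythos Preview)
Your outline is essentially the paper's proof: the exact splitting $\log\det\Delta=\bigl(\prod_i a_i(n)\bigr)c_d+\mathcal{H}_{d,n}$ with $\mathcal{H}_{d,n}=-\int_0^\infty\theta^{\mathrm{err}}(t)\,\tfrac{dt}{t}$, the rescaling $t\mapsto n^2\tau$, the pointwise limit $\theta^{G_n}(n^2\tau)\to\Theta(\tau)$, the identification of the limiting integral with $\zeta_{EH}'(0)$ (your formula agrees with the paper's after splitting at $\tau=1$ and using $\int_1^\infty\tau^{-d/2-1}\,d\tau=2/d$), and dominated convergence as the remaining work.

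One tactical remark on that last step. For the large-$\tau$ uniform bound the paper does \emph{not} go through Bessel or Debye asymptotics: it stays on the eigenvalue side, writing $\theta^{G_n}(n^2\tau)=\prod_i\sum_j\exp\!\bigl(-4n^2\tau\sin^2\tfrac{\pi(j+\lambda_i(n))}{a_i(n)}\bigr)$ and using $\sin x\ge x-x^3/6$ on $[0,\pi/2]$ together with the hypothesis that some $\lambda_i\notin\{0,1\}$, to obtain $\theta^{G_n}(n^2\tau)\le e^{-c\tau}$ uniformly for large $n$. For the small-$\tau$ side (controlling $\theta^{G_n}(n^2\tau)-\theta^{\mathrm{lat}}(n^2\tau)$ near $\tau=0$) the paper simply quotes the uniform Bessel estimates already established in \cite{CJK10}; since the twist factors $e^{2\pi i w\lambda_i}$ have modulus one, those bounds carry over verbatim. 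So the ``technical core'' you anticipate is lighter than a full two-parameter Bessel analysis, provided you switch to the sine representation for the tail bound.
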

The constant $c_d$ is the same as for the spanning trees and it is known that $c_1=0$ and $c_2=\frac{4G}{\pi}$, where $G$ is Catalan's constant (see \cite{CJK10} or \cite{SS01}). The difference lies in the second term (if we forget about the $\log(u^2)$ term in \cite{CJK10}).\\
In dimension $d=2$ there is a nice expression for $\zeta_{EH}'(0;\alpha_1,\alpha_2;\lambda_1,\lambda_2)$ in the spirit of the famous Kroncecker limit formula, which adds some interest to this asymptotics independently of the combinatorial setting.
\begin{theorem}\label{Kronecker}
If $d=2$ we have $$\zeta_{EH}'(0;\alpha_1,\alpha_2;\lambda_1,\lambda_2)=2\pi\frac{\alpha_1}{\alpha_2}B_2(\lambda_2)-2\log\prod_{n\in\bbZ}\Big|1-e^{2\pi i\lambda_1}e^{-2\pi\frac{\alpha_1}{\alpha_2}|n+\lambda_2|}\Big|,$$ where $B_2(x)=x^2-x+\frac{1}{6}$ stands for the second Bernoulli polynomial.
\end{theorem}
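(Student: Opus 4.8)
The plan is to run the classical Kronecker-limit-formula machinery: a Mellin (heat-kernel) representation, Poisson summation in one of the two variables, and a separation of the zero Fourier mode from the rest. Write $Q(x_1,x_2)=(x_1/\alpha_1)^2+(x_2/\alpha_2)^2$ and set $F(s)=\sum_{K\in\bbZ^2}Q(K+\lambda)^{-s}$, so that $\zeta_{EH}(s)=(2\pi)^{-2s}F(s)$ and hence $\zeta_{EH}'(0)=F'(0)-2\log(2\pi)\,F(0)$. My first step is to show $F(0)=0$ and, more precisely, that $\zeta_{EH}'(0)=G(0)$ where $G(s):=\Gamma(s)F(s)=\int_0^\infty t^{s-1}\Theta(t)\,dt$ and $\Theta(t)=\sum_{K}e^{-tQ(K+\lambda)}$. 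Because $\lambda_i\notin\{0,1\}$ for some $i$, the values $Q(K+\lambda)$ stay bounded away from $0$, so $\Theta$ decays exponentially as $t\to\infty$ and (by Poisson) behaves like $\pi\alpha_1\alpha_2/t$ with no constant term as $t\to0$; this makes $G$ holomorphic at $s=0$. Since $1/\Gamma$ has a simple zero at $s=0$ with derivative $1$, we get $F(0)=0$ and $F'(0)=G(0)$, reducing everything to the single value $G(0)$.

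Next I factor $\Theta(t)=\theta_1(t)\theta_2(t)$ with $\theta_i(t)=\sum_{k\in\bbZ}e^{-t(k+\lambda_i)^2/\alpha_i^2}$, and apply the shifted Jacobi transformation only to $\theta_1$, namely $\theta_1(t)=\alpha_1\sqrt\pi\,t^{-1/2}\sum_{m\in\bbZ}e^{2\pi i m\lambda_1}e^{-\pi^2\alpha_1^2 m^2/t}$. Substituting and integrating term by term (justified for $\Re(s)$ large, then continued) gives
\[
G(s)=\alpha_1\sqrt\pi\sum_{m\in\bbZ}e^{2\pi i m\lambda_1}\sum_{k\in\bbZ}\int_0^\infty t^{s-3/2}\,e^{-\pi^2\alpha_1^2 m^2/t}\,e^{-t(k+\lambda_2)^2/\alpha_2^2}\,dt .
\]
I then split off the $m=0$ term. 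There the $t$-integral is an ordinary Gamma integral and summing over $k$ yields $\alpha_1\sqrt\pi\,\Gamma(s-\tfrac12)\,\alpha_2^{2s-1}\big(\zeta(2s-1,\lambda_2)+\zeta(2s-1,1-\lambda_2)\big)$, which is regular at $s=0$. Evaluating with $\Gamma(-\tfrac12)=-2\sqrt\pi$, $\zeta(-1,x)=-B_2(x)/2$, and the symmetry $B_2(1-x)=B_2(x)$ produces exactly $2\pi\frac{\alpha_1}{\alpha_2}B_2(\lambda_2)$, the first term of the claimed identity.

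For the terms with $m\neq0$ the integral is a Bessel integral, $2(A/B)^{(s-1/2)/2}K_{s-1/2}(2\sqrt{AB})$ with $A=\pi^2\alpha_1^2 m^2$ and $B=(k+\lambda_2)^2/\alpha_2^2$. The crucial point, special to $d=2$, is that at $s=0$ the order becomes $-\tfrac12$, and the half-integer Bessel function collapses to an elementary exponential, $K_{-1/2}(z)=\sqrt{\pi/2z}\,e^{-z}$; a short simplification reduces each term to $\frac{1}{\sqrt\pi\,|m|\,\alpha_1}\,e^{-2\pi(\alpha_1/\alpha_2)|m||k+\lambda_2|}$. Pairing $m$ with $-m$ to form $2\cos(2\pi m\lambda_1)$ and summing over $m\geq1$ via $\sum_{m\geq1}x^m/m=-\log(1-x)$ (with $x=e^{-2\pi(\alpha_1/\alpha_2)|k+\lambda_2|}$) converts the double sum into $-2\log\prod_{n\in\bbZ}\big|1-e^{2\pi i\lambda_1}e^{-2\pi(\alpha_1/\alpha_2)|n+\lambda_2|}\big|$; adding the $m=0$ contribution gives the theorem. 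The main obstacle is analytic rather than algebraic: rigorously justifying the termwise Mellin inversion and the holomorphy of $G$ at $s=0$ (so that $\zeta_{EH}'(0)=G(0)$ really holds), together with the mild degenerate case $\lambda_2=0$, where the $k=0$ summand has $B=0$ and must be treated by the $\Gamma$-integral $\int_0^\infty t^{s-3/2}e^{-A/t}\,dt$ separately before passing to the limit.
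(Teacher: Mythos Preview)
Your proof is correct and follows essentially the same route as the paper's: a Chowla--Selberg / Kronecker-limit computation via Poisson summation in one variable, separating the $m=0$ Fourier mode (which yields the Hurwitz-zeta and hence the $B_2(\lambda_2)$ term) from the $m\neq 0$ modes (which become $K_{s-1/2}$ Bessel integrals that collapse to elementary exponentials at $s=0$ and resum to the logarithm of the infinite product). The only difference is cosmetic: the paper quotes the Bessel-function expansion of $\Gamma(s)\zeta_{EH}(s)$ from Berndt's Theorem~2 and then evaluates at $s=0$, whereas you derive that expansion yourself from the heat-kernel Mellin representation; your treatment of the degenerate case $\lambda_2\in\{0,1\}$ and your observation that $F(0)=0$ (so $\zeta_{EH}'(0)=G(0)$) match what the paper does implicitly.
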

After giving some definitions in Section 2, we compute the heat kernel on $G_n$ and determine asymptotics for the associated theta functions in Section 3. The proof of Theorem \ref{thm11} is carried out in Section 4.
\\
Finally, in Section 5 we consider the spectral zeta function of $G_n$ as defined in \cite{FK16} and show the following.
\begin{theorem}\label{zeta}
Let $s\in\bbC$ such that $s\neq m +\frac{d}{2}$ for $m\in\bbN$. Then, as $n\longrightarrow\infty$, we have $$\zeta_{G_n}(s)=\Big(\prod_{i=1}^{d}a_i(n)\Big)\zeta_{\bbZ^d}(s)+\zeta_{EH}(s;\alpha_1,\ldots,\alpha_d;\lambda_1,\ldots,\lambda_d)n^{2s}+o(n^{2s}).$$
\end{theorem}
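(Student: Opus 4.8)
The plan is to realize $\zeta_{G_n}(s)$ as a Mellin transform of the heat trace and to feed in the explicit theta function of Section 3. Because $\lambda_i\notin\{0,1\}$ for some $i$, the bundle Laplacian $\Delta$ has no harmonic section, so every eigenvalue $\mu_K=\sum_{j=1}^d\big(2-2\cos(2\pi(k_j+\lambda_j)/a_j(n))\big)$ (indexed by $K=(k_1,\dots,k_d)$ with $0\le k_j<a_j(n)$) is strictly positive, and $\zeta_{G_n}(s)=\sum_K\mu_K^{-s}$ is an entire function of $s$. For $\Re(s)>0$ it equals $\frac{1}{\Gamma(s)}\int_0^\infty t^{s-1}\theta_{G_n}(t)\,dt$ with $\theta_{G_n}(t)=\sum_K e^{-t\mu_K}$. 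First I would substitute the winding (Bessel) form of the theta function from Section 3, $$\theta_{G_n}(t)=\Big(\prod_{i=1}^d a_i(n)\Big)e^{-2dt}\sum_{W\in\bbZ^d}\prod_{j=1}^d e^{2\pi i w_j\lambda_j}\,I_{a_j(n)w_j}(2t),$$ and isolate the term $W=0$.

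Working in the strip $0<\Re(s)<d/2$, where the two resulting pieces converge absolutely, the $W=0$ term $\big(\prod_i a_i(n)\big)e^{-2dt}I_0(2t)^d$ has Mellin transform $\big(\prod_i a_i(n)\big)\zeta_{\bbZ^d}(s)$ by the definition of the per-site spectral zeta function of the lattice; this produces the first summand with no approximation. It then remains to control the contribution $R_n(s)$ of the terms $W\neq0$ and to prove $R_n(s)=\zeta_{EH}(s)\,n^{2s}+o(n^{2s})$.

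For $R_n$ I would pass to the regime $t\sim n^2$: for $w_j\neq0$ the factor $I_{a_j(n)w_j}(2t)$ makes the summand exponentially small unless $t$ is of order $(a_j(n)w_j)^2\sim n^2$, so the integral concentrates there. On that scale the uniform large-order asymptotics of the modified Bessel function give $e^{-2t}I_{a_j(n)w_j}(2t)\approx(4\pi t)^{-1/2}e^{-(a_j(n)w_j)^2/(4t)}$, and a Poisson summation in each coordinate converts the resulting Gaussian winding sum into its momentum dual, $$\theta_{G_n}(t)-\Big(\prod_i a_i(n)\Big)e^{-2dt}I_0(2t)^d\ \approx\ \sum_{K\in\bbZ^d}e^{-t(2\pi)^2\sum_j((k_j+\lambda_j)/a_j(n))^2}-\Big(\prod_i a_i(n)\Big)(4\pi t)^{-d/2}.$$ The Mellin transform of the momentum sum is $\sum_K\big((2\pi)^2\sum_j((k_j+\lambda_j)/a_j(n))^2\big)^{-s}$, and inserting $a_j(n)=\alpha_j n$ factors out $n^{2s}$ and leaves exactly $\zeta_{EH}(s)$; the scaleless term $\big(\prod_i a_i(n)\big)(4\pi t)^{-d/2}$ has vanishing Mellin transform under analytic continuation. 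This pins down the main term $\zeta_{EH}(s)\,n^{2s}$.

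The hard part is the error control, uniformly in $t$ and locally uniformly in $s$. The Gaussian replacement is dual to approximating the eigenvalues $\mu_K$ by their quadratic part $(2\pi)^2\sum_j((k_j+\lambda_j)/a_j(n))^2$, and since $2-2\cos x-x^2=O(x^4)$ while the dominant modes have argument $O(1/n)$, the per-mode relative error on the scale $t\sim n^2$ is $O(1/n^2)$ (equivalently, the subleading terms in the uniform asymptotics of $I_\nu$ are relatively $O(1/n^2)$ there). Weighted by $t^{s-1}$ and summed over $K$, with the tails dominated by their integrals, this contributes $o(n^{2s})$; I would make this rigorous by cutting the $t$-integral at a threshold of order $n^2$, treating the low-$t$ range where $R_n$ is exponentially small separately from the high-$t$ range handled by the Bessel estimates. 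Finally, the hypothesis $s\neq m+\frac{d}{2}$ $(m\in\bbN)$ is exactly what keeps each limiting piece finite: these are the poles of $\zeta_{\bbZ^d}$ coming from the $t^{-d/2-m}$ terms in the expansion of $e^{-2dt}I_0(2t)^d$, and of the Epstein--Hurwitz zeta, poles which must cancel in the limit since $\zeta_{G_n}$ is entire; away from them every Mellin integral and its continuation is regular, so the asymptotic identity holds after continuation from the strip of absolute convergence.
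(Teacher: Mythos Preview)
Your plan is the paper's plan: split off the $W=0$ term to get $\big(\prod_i a_i(n)\big)\zeta_{\bbZ^d}(s)$ exactly, rescale the remainder to the regime $t\sim n^2$, and identify the limit with the Mellin transform of $\theta^\infty$, i.e.\ with $\zeta_{EH}$. The paper implements this by the change of variables $t\mapsto n^2t$ and the five-term decomposition $S_1,\dots,S_5$; your ``concentration at $t\sim n^2$'' and ``Poisson to momentum space'' are exactly $S_1+S_2$ and Proposition~\ref{rescaledtheta}.

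Two points in your write-up are not yet proofs. First, the assertion that ``the scaleless term $\big(\prod_i a_i(n)\big)(4\pi t)^{-d/2}$ has vanishing Mellin transform under analytic continuation'' is a heuristic: the Mellin integral of $t^{-d/2}$ over $(0,\infty)$ converges for no $s$. The paper handles this by keeping it only on $[n^2,\infty)$ (that is $S_5$), where it evaluates to an explicit term that combines with $S_1,S_2$ to produce the analytic continuation of $\zeta_{EH}$ displayed in~(\ref{derzero}); you should do the same rather than invoke a formal cancellation. Second, your error paragraph needs a uniform bound to pass the limit through the integral: the paper uses Lemma~\ref{expbound} (the uniform estimate $\theta^{G_n}(n^2t)\le e^{-ct}$) for $t\ge 1$ and the bounds from \cite{CJK10} for $t\le 1$, not a per-mode $O(1/n^2)$ argument. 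Also, ``inserting $a_j(n)=\alpha_j n$'' is itself an approximation contributing to the $o(n^{2s})$; and note that $\zeta_{EH}$ has only the single pole at $s=d/2$, so the excluded points $s=m+d/2$ for $m\ge 1$ come solely from $\zeta_{\bbZ^d}$, not from any cancellation between the two.
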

This should be compared with the results obtained in \cite{FK16} and \cite{Fri16} where similar formulas were obtained in the case of the standard Laplacian and spectral $L$-functions, respectively.\\

In this work we only consider diagonal discrete tori, but with a bit of work, more or less the same results should hold for arbitrary discrete tori, see \cite{CJK12}.\\

\noindent
\textbf{Acknowledgement. }The author is grateful to Anders Karlsson for suggesting this problem to him and for useful discussions and comments on this project. The author also thanks Justine Louis and Pham Anh Minh for interesting discussions related to this work.  
\end{section}

\begin{section}{Definitions}

As explained by Kenyon in \cite{Ken11}, given a finite graph $G$, we can construct a vector bundle over it. To each vertex of $G$ we associate a vector space isomorphic to a given vector space. For each oriented edge we can then choose an isomorphism between the two vector spaces attached to the end-points of that edge, with the condition that the isomorphism corresponding to an oriented edge must be the inverse of the isomorphism corresponding to the same edge oriented in the opposite direction. The set of these isomorphisms is called the \emph{connection} of $G$. We have a natural generalization of the Laplacian operator on such graphs, the \emph{bundle Laplacian}. It acts on the functions $f:VG\longrightarrow\bbC$ (where $VG$ denotes the set of vertices of $G$) and it is defined by
\begin{displaymath}
	\Delta f(v)=\sum_{w\sim v}(f(v)-\phi_{w,v}f(w)),
\end{displaymath}
where the sum is over all adjacent vertices and $\phi_{w,v}$ denotes the isomorphism for the oriented edge $wv$.
\begin{remark}
\emph{If the bundle is trivial (all the isomorphisms are identity), this is the usual Laplacian.}
\end{remark}
For the standard Laplacian we know by the matrix-tree theorem of Kirchhoff that its determinant (in fact the product of the non-zero eigenvalues) counts spanning trees in the graph. We have a similar combinatorial interpretation here. For this we only consider \emph{line bundles}, that is bundles of dimension one. In this case we associate a copy of $\bbC$ to each vertex and the isomorphisms are just multiplication by a non-zero complex number.  We can see this process as a choice of a complex weight on each oriented edge (with the inverse weight for the same edge with opposite orientation), but the bundle Laplacian should not be confused with what is usually called the weighted Laplacian (see for example \cite{Moh91}).
\\
Given an oriented cycle, the product of the weights on the oriented edges along the cycle is called the \emph{monodromy} of the cycle.
\\
A subset of the set of the edges of a given graph which spans all the vertices of the graph and such that each connected component has exactly one cycle is called a \emph{cycle-rooted spanning forest} and abreviated $CRSF$. The analog of Kirchhoff theorem is then (\cite{Ken11})
\begin{theorem}\label{Kenyon}
	For a line bundle on a connected finite graph,
	\begin{displaymath}
		\det\Delta=\sum_{CRSF's}\prod_{cycles}(2-w-\frac{1}{w}),
	\end{displaymath}
	where the sum is over all unoriented CRSF's $C$, the product is over cycles of $C$ and $w,\frac{1}{w}$ are the monodromies of the two orientations of the cycle.
\end{theorem}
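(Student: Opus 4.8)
The plan is to adapt the classical proof of the matrix--tree theorem, via the Cauchy--Binet formula applied to a twisted analogue of the incidence matrix. First I would fix an orientation of each edge of $G$ and attach to the connection two rectangular matrices. Writing an oriented edge as $e=(u\to v)$ with tail $u$ and head $v$, let $D$ be the $|EG|\times|VG|$ matrix whose row indexed by $e$ has entry $1$ in column $v$ and entry $-\phi_{u,v}$ in column $u$, and let $\widehat D$ be the $|VG|\times|EG|$ matrix whose column indexed by $e$ has entry $1$ in row $v$ and entry $-\phi_{v,u}$ in row $u$. A direct computation of $\widehat D D$ shows that its $(x,x)$ entry is the degree of $x$ and its $(x,w)$ entry is $-\phi_{w,x}$ for $w\sim x$, so that $\widehat D D=\Delta$. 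When the connection is unitary one has $\widehat D=D^{*}$ and this is the usual factorisation $\Delta=D^{*}D$; the identity $\widehat D D=\Delta$ holds, however, for arbitrary nonzero weights, which is what the statement requires.

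Next I would apply the Cauchy--Binet formula to $\det\Delta=\det(\widehat D D)$, obtaining $\det\Delta=\sum_{S}\det \widehat D_{\,\cdot\,,S}\,\det D_{S,\,\cdot\,}$, where the sum is over all edge subsets $S$ with $|S|=|VG|$ and $\widehat D_{\,\cdot\,,S}$, $D_{S,\,\cdot\,}$ are the corresponding square submatrices. Since an edge meets only vertices in its own connected component, after reordering both submatrices are block diagonal along the components of the subgraph $(VG,S)$, the block attached to a component having as many rows as its edges and as many columns as its vertices. A product of such minors can be nonzero only if every block is square, i.e. if each component carries as many edges as vertices; together with $|S|=|VG|$ this forces each component to contain exactly one cycle. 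Hence only the CRSF's contribute, matching the index set of the statement.

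It then remains to evaluate, for a CRSF $S$, the contribution of a single unicyclic component. Here I would peel off the pendant tree edges one at a time: a leaf vertex meets exactly one edge, so expanding $\det D$ along its column and $\det\widehat D$ along its row each picks up a single factor, and a short case check on whether the leaf is the head or the tail of its edge shows that the two factors multiply to $\phi_{u,v}\phi_{v,u}=1$, the accompanying cofactor signs cancelling once one expands on a common ordering. Thus every tree edge contributes $1$ to the product $\det\widehat D\cdot\det D$, reducing the computation to that of a pure cycle. For a cycle of length $m$ with monodromy $w$ both matrices are cyclic bidiagonal, with determinants $(-1)^{m-1}(1-w)$ and $(-1)^{m-1}(1-1/w)$; their product is $(1-w)(1-1/w)=2-w-1/w$. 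Multiplying over the components of $S$ and summing yields the claimed formula, and since $2-w-1/w$ is invariant under $w\leftrightarrow 1/w$ the sum is well defined over unoriented CRSF's.

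I expect the main obstacle to be the bookkeeping in this last step: tracking simultaneously the monodromy weights and the cofactor signs produced by peeling the tree edges, and checking that they cancel in the product of the two minors precisely because $\widehat D$ and $D$ carry reciprocal transports $\phi_{v,u}=\phi_{u,v}^{-1}$ on each edge. The use of two distinct incidence matrices, rather than one matrix and its adjoint, is exactly what keeps the argument valid for general complex (non-unitary) weights.
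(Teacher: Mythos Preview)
The paper does not actually prove this theorem: it is quoted from \cite{Ken11} (and originally \cite{For93}) as background and is used only as a black box, so there is no ``paper's own proof'' to compare against. Your outline is, however, essentially the standard argument that Forman and Kenyon give: factor $\Delta=\widehat D D$ through a twisted incidence matrix, expand via Cauchy--Binet, observe that only unicyclic spanning subgraphs survive, and then evaluate the minor of each component.

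Your sketch is correct in its architecture; the one place where I would ask you to be more careful is the claim that for a pure cycle ``both matrices are cyclic bidiagonal''. This is only literally true if the fixed reference orientation of $G$ happens to orient all edges of the cycle consistently. In general some edges point ``backwards'', and the corresponding rows of $D$ (and columns of $\widehat D$) carry the weight on the opposite column. The fix is easy---reversing the reference orientation on an edge multiplies its row of $D$ by $-\phi_{v,u}$ and its column of $\widehat D$ by $-\phi_{u,v}$, so the product $\det\widehat D\,\det D$ is unchanged---but you should say this explicitly before invoking the bidiagonal form. With that caveat, and the sign bookkeeping you already flag, the argument goes through and yields exactly $(1-w)(1-1/w)=2-w-1/w$ per cycle.
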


\begin{remark}
\emph{If the weights on the edges are chosen to be of modulus one, the bundle is called \emph{unitary} and the bundle Laplacian becomes Hermitian and positive semidefinite.}
\end{remark}
We will evaluate $\det\Delta$ for a sequence of line bundles over discrete tori, when the number of vertices goes to infinity. More precisely, let $d\geq 1$ be an integer (the "dimension"). For each $i\in\{1,\ldots, d\}$, let $a_i(n)$ be a sequence of integers such that $$\lim_{n\rightarrow\infty}\frac{a_i(n)}{n}=\alpha_i>0.$$ For every $n$ and $i\in\{1,\ldots,d\}$ we associate a complex number $w_{i,j}(n)$ of modulus one to the oriented edge between $j$ and $j+1$ (with $0\leq j\leq a_i(n)-1$) in the Cayley graph of $\bbZ/a_i(n)\bbZ$ with generators $\{\pm 1\}$.  We consider the \emph{discrete torus} defined by the Cayley graph of $$\prod_{i=1}^d\bbZ/a_i(n)\bbZ$$ (with generators given by $(0,\ldots,0,\pm1,0,\ldots,0)$) and the natural line bundle which comes with it (that is, the weight of an oriented edge in this graph is given by the weight associated to the corresponding edge in some $Cay(\bbZ/a_i(n)\bbZ)$). We denote this graph (with the line bundle) by $G_n$. Note that this graph depends on several parameters, namely $d$, $n$, $a_i(n)$ and $w_{i,j}(n)$, but in order to simplify the notation we only write the dependence in $n$.
\end{section}
\begin{section}{Heat kernel and theta functions}
We adapt the method used in \cite{CJK10} and \cite{FK16} to compute asymptotics for $\det\Delta$ and for the spectral zeta function associated to $G_n$ for the sequence of discrete tori described above. The first step is to compute the heat kernel of the graph $G_n$, that is the unique bounded solution $$K:\bbR_{+}\times G_n\longrightarrow\bbR$$ of the equation
\begin{equation*}
(\Delta+\frac{\partial}{\partial t})K(t,x)=0
\end{equation*}
with initial condition $K(0,x)=\delta_0(x)$, where $\delta$ is the Kronecker delta and $0$ means the vertex corresponding to $(0,\ldots,0)$ in $G_n$.\\
The existence and uniqueness of such a function is established for a general class of graphs and for the standard Laplacian in \cite{DM06} and \cite{Dod06}. Here we do not need a general theory, because it is possible and quite easy to check the uniqueness of the solution found in Proposition \ref{heatkernelformula} by taking the Fourier transform and solving the corresponding differential equation.
\begin{proposition}\label{heatkernelformula}
The heat kernel for the graph $G_n$ defined above is given by $$K(t,x)=e^{-2dt}\sum_{K\in\bbZ^d}\prod_{i=1}^{d}I_{x_i+k_ia_i(n)}(2t)\prod_{j=0}^{a_i(n)-1}w_{i,j}(n)^{[\frac{j-x_i}{a_i(n)}]-k_i},$$ where we write $K=(k_1,\ldots,k_d)$, $x=(x_1,\ldots,x_d)\in\prod_{i=1}^d\bbZ/a_i(n)\bbZ$ and $I_x$ for the modified Bessel function of the first kind of order $x$.
\end{proposition}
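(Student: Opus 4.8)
The plan is to exploit the product structure of $G_n$, reduce everything to a one-dimensional computation, solve that by lifting to the universal cover $\bbZ$ together with the method of images, and then confirm the resulting formula by a direct check using the differential recurrence for modified Bessel functions. Concretely, $G_n$ is a Cartesian product of the one-dimensional bundles over $\mathrm{Cay}(\bbZ/a_i(n)\bbZ)$, and under this product the bundle Laplacian splits as $\Delta=\sum_{i=1}^d\Delta_i$, where $\Delta_i$ acts only on the $i$-th coordinate through the $i$-th connection. The operators $\Delta_i$ act on different coordinates with coefficients independent of the others, so they commute; hence $e^{-t\Delta}=\prod_i e^{-t\Delta_i}$ and, since $\delta_0=\prod_i\delta_0^{(i)}$ factorizes, the heat kernel factorizes as $K(t,x)=\prod_{i=1}^d K_i(t,x_i)$. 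This reduces the proposition to the claim that the heat kernel of the single bundle over $\mathrm{Cay}(\bbZ/a\bbZ)$ (writing $a=a_i(n)$, $w_j=w_{i,j}(n)$) equals $K_1(t,x)=e^{-2t}\sum_{k\in\bbZ}I_{x+ka}(2t)\,c_k(x)$, where $c_k(x)=\prod_{j=0}^{a-1}w_j^{[\frac{j-x}{a}]-k}$ and $[\cdot]$ is the integer part.

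To produce this one-dimensional formula I would lift the bundle to $\bbZ$ and trivialize the pulled-back connection by a gauge (parallel-transport) transformation, which turns $\Delta_1$ into the ordinary combinatorial Laplacian on $\bbZ$. The heat kernel of the latter is the classical $e^{-2t}I_n(2t)$, since $p(t,n)=e^{-2t}I_n(2t)$ satisfies $\partial_t p=-\Delta p$ by the Bessel identity $\frac{d}{dt}I_\nu(2t)=I_{\nu-1}(2t)+I_{\nu+1}(2t)$ and $p(0,n)=I_n(0)=\delta_{n,0}$. Pushing this kernel down to the finite cycle by the method of images amounts to summing over the deck group $a\bbZ$, each image $x+ka$ being weighted by the monodromy $W^{-k}$ with $W=\prod_{j=0}^{a-1}w_j$, while the gauge is undone by the holonomy factor recording parallel transport from $0$ to $x$. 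The bookkeeping of these two contributions is exactly the exponent $[\frac{j-x}{a}]-k$, so that $c_k(x)=W^{-k}c_0(x)$ with $c_0(x)=\prod_j w_j^{[\frac{j-x}{a}]}$.

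Rather than rely on this heuristic, I would then verify the formula directly. The initial condition follows from $I_{x+ka}(0)=\delta_{x+ka,0}$ together with $c_0(0)=1$ (since $[\frac{j}{a}]=0$ for $0\le j\le a-1$). For the heat equation, differentiating term by term (legitimate because $I_\nu(2t)$ decays super-exponentially in $\nu$, so the image series and its $t$-derivative converge absolutely and uniformly on compact $t$-intervals) and applying the Bessel recurrence reduces $(\Delta_1+\partial_t)K_1=0$ to the single cocycle identity $c_k(x+1)=\phi_{x,x+1}\,c_k(x)$, i.e.\ that the phase factor transforms under a unit shift by the corresponding edge weight. This in turn rests on the elementary fact that $[\frac{m-1}{a}]-[\frac{m}{a}]$ equals $-1$ when $a\mid m$ and $0$ otherwise, which isolates precisely the one index $j\equiv x\ (\mathrm{mod}\ a)$ in the product and yields the required edge weight. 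Uniqueness is immediate because $G_n$ is finite, so any solution of the linear system $\dot K=-\Delta K$ with $K(0)=\delta_0$ is the matrix exponential $e^{-t\Delta}\delta_0$; equivalently one diagonalizes $\Delta$ by the twisted characters of the torus and solves the resulting scalar ODEs, as indicated after the statement.

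I expect the combinatorial bookkeeping of the monodromy phases to be the main obstacle: the Bessel analysis and the reduction to one dimension are routine, but pinning down the exponent $[\frac{j-x}{a}]-k$ so that it simultaneously encodes the winding number $k$, the holonomy from $0$ to $x$, and transforms correctly under $x\mapsto x\pm1$ (matching the sign convention of $\phi_{x,x\pm1}$ in the definition of $\Delta$) is where the care is needed.
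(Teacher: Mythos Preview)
Your proposal is correct and follows essentially the same route as the paper: factorize the heat kernel as $K(t,x)=\prod_i K_i(t,x_i)$ via the splitting $\Delta=\sum_i\Delta_i$, then verify each one-dimensional factor directly using the Bessel recurrence $I_\nu'(2t)=I_{\nu-1}(2t)+I_{\nu+1}(2t)$ together with the floor-function identity that isolates the single index $j\equiv x\pmod a$ in the phase product. The only cosmetic difference is emphasis: you foreground the method-of-images heuristic (lift to $\bbZ$, trivialize by gauge, push down with monodromy weights) before the verification, whereas the paper carries out the direct verification first and relegates the images explanation to a remark afterwards.
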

\begin{proof}
First note that $K(t,x)$ is well-defined in the sense that we have $K(t,x)=K(t,y)$ if $x\equiv y$ in $\prod_{i=1}^d\bbZ/a_i(n)\bbZ$. Also the infinite sum is convergent and is bounded in $t$, as can be seen using the series representation for the modified Bessel function $I_x$, see \cite{KN06}.\\
\noindent
For $i\in\{1,\ldots,d\}$ and $x_i\in\bbZ/a_i(n)\bbZ$ define $$K_i(t,x_i):=e^{-2t}\sum_{k\in\bbZ}I_{x_i+ka_i(n)}(2t)\prod_{j=0}^{a_i(n)-1}w_{i,j}(n)^{[\frac{j-(x_i+ka_i(n))}{a_i(n)}]}.$$ With this notation we have $K(t,x)=\prod_{i=1}^{d}K_i(t,x_i)$, where $x=(x_1,\ldots,x_d)$.\\

Let $x_i\in\bbZ/a_i(n)\bbZ$ and write $x_i=m+ra_i(n)$ with $0\leq m \leq a_i(n)-1$ (for convenience we do not write the dependence in $x_i$ for $m$ and for $r$). Since $\Big[\frac{j-(m+1)}{a_i(n)}\Big]=\Big[\frac{j-m}{a_i(n)}\Big]$ if $j\neq m$ and $\Big[\frac{j-(m+1)}{a_i(n)}\Big]=\Big[\frac{j-m}{a_i(n)}\Big]-1$ if $j=m$ we observe that
\begin{align*}
K_i(t,x_i+1)&=e^{-2t}\sum_{k\in\bbZ}I_{x_i+1+ka_i(n)}(2t)\prod_{j=0}^{a_i(n)-1}w_{i,j}^{k+r+[\frac{j-(m+1)}{a_i(n)}]}\\&=w_{i,m}e^{-2t}\sum_{k\in\bbZ}I_{x_i+1+ka_i(n)}(2t)\prod_{j=0}^{a_i(n)-1}w_{i,j}^{[\frac{j-(x_i+ka_i(n))}{a_i(n)}]}.
\end{align*}
Similarly we have $$K_i(t,x_i-1)=w_{i,m-1}^{-1}e^{-2t}\sum_{k\in\bbZ}I_{x_i-1+ka_i(n)}(2t)\prod_{j=0}^{a_i(n)-1}w_{i,j}^{[\frac{j-(x_i+ka_i(n))}{a_i(n)}]},$$ where $m-1$ must be understood modulo $a_i(n)$, that is if $m=0$ the weigth above is in fact $w_{i,a_i(n)-1}^{-1}$. We keep this convention for the rest of the proof.
Therefore, using the relation $I_x'(2t)=I_{x-1}(2t)+I_{x+1}(2t)$, we have
\begin{equation*}
-\frac{\partial}{\partial t}K_i(t,x_i)=2K_i(t,x_i)-w_{i,m}^{-1}K_i(t,x_i+1)-w_{i,m-1}K_i(t,x_i-1).
\end{equation*}
In other words, $K_i$ is a solution to the heat equation on the $i$-th copy of our Cayley graph $\prod_{i=1}^{d}\bbZ/a_i(n)\bbZ.$ Now we can compute
\begin{align*}
-\frac{\partial}{\partial t}K(t,x)&=\sum_{i=1}^{d}(-\frac{\partial}{\partial t}K_i(t,x_i))\prod_{l\neq i}K_l(t,x_l)\\&=2dK(t,x)-\sum_{i=1}^{d}(w_{i,m}^{-1}K_i(t,x_i+1)+w_{i,m-1}K_i(t,x_i-1))\prod_{l\neq i}K_l(t,x_l)\\&=\Delta K(t,x).
\end{align*}
Using the fact that $I_0(0)=1$ and $I_m(0)=0$ for all $m\in\bbZ^*$ it is easy to check that $K(t,x)=\delta_0(x)$, which completes the proof.
\end{proof}
\begin{remark}
\emph{Notice that in the proof we show in fact that the heat kernel of the product graph is equal to the product of the heat kernels on each cyclic copy. To guess the formula on one copy we computed the heat kernel on $\bbZ$ with periodic weights and then took the quotient by making the function we obtained periodic. See \cite{KN06} for more details about this procedure in the standard case without the weights.}
\end{remark}
From now on, we write $K_i(t)$ for $K_i(t,0)$ and $K(t)$ for $K(t,0)$. As explained in \cite{FK16} the spectral zeta function of a finite graph is the Mellin transform of the trace of the heat kernel or, equivalently, of the theta function associated to the Laplacian acting on this graph. Here the theta function associated to $G_n$ is given by $$\theta^{G_n}(t):=\prod_{i=1}^{d}a_i(n)K_i(t),$$ where $K_i$ is as in the proof above, that is $$K_i(t)=e^{-2t}\sum_{k\in\bbZ}I_{ka_i(n)}(2t)\prod_{j=0}^{a_i(n)-1}w_{i,j}(n)^{-k}.$$
For convenience we write $\theta_i^{G_n}(t)=a_i(n)K_i(t)$. From this last expression, it is easy to see that $\theta_i^{G_n}(t)\sim a_i(n)e^{-2t}I_0(2t)$ when $t\longrightarrow 0$ (see Lemma \ref{asymp}). We will also need to know the behavior of $\theta_i^{G_n}(t)$ when $t\longrightarrow\infty$.
\begin{proposition}\label{theta}
Let $\lambda_i(n)\in[0,1)$ such that $\prod_{j=0}^{a_i(n)-1}w_{i,j}=e^{2\pi i \lambda_i(n)}$. For any $t\in\bbR$ we have $$\theta_i^{G_n}(t)=\sum_{j=0}^{a_i(n)-1}e^{-4t\sin^2\Big(\frac{\pi(j+\lambda_i(n))}{a_i(n)}\Big)}.$$
\end{proposition}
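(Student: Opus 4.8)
The plan is to recognize the right-hand side as the spectral expansion of the theta function and then to verify the identity directly from the explicit Bessel series for $K_i(t)$. To keep the notation light I would write $N=a_i(n)$ and $\lambda=\lambda_i(n)$ throughout. The first step is to simplify the weight factor in $K_i$: since $\prod_{j=0}^{N-1}w_{i,j}(n)=e^{2\pi i\lambda}$, we have $\prod_{j=0}^{N-1}w_{i,j}(n)^{-k}=e^{-2\pi ik\lambda}$, so that
$$\theta_i^{G_n}(t)=NK_i(t)=Ne^{-2t}\sum_{k\in\bbZ}I_{kN}(2t)\,e^{-2\pi ik\lambda}.$$
This reduces the problem to matching a single explicit series against the claimed finite sum.

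Next I would start from the claimed right-hand side and expand each summand. Using the half-angle identity $4\sin^2(\theta/2)=2-2\cos\theta$, each term is $e^{-2t}\exp\!\big(2t\cos(2\pi(j+\lambda)/N)\big)$, and I would expand the exponential via the Jacobi--Anger generating function $e^{z\cos\phi}=\sum_{k\in\bbZ}I_k(z)e^{ik\phi}$ for the modified Bessel functions. This produces a double sum over $j\in\{0,\dots,N-1\}$ and $k\in\bbZ$. The decisive step is to interchange the finite $j$-sum with the $k$-sum and evaluate the geometric sum $\sum_{j=0}^{N-1}e^{2\pi ikj/N}$, which equals $N$ when $N\mid k$ and $0$ otherwise. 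Writing $k=lN$, this collapses the expression to
$$\sum_{j=0}^{N-1}e^{-4t\sin^2(\pi(j+\lambda)/N)}=Ne^{-2t}\sum_{l\in\bbZ}I_{lN}(2t)\,e^{2\pi il\lambda}.$$
Finally I would match this against $\theta_i^{G_n}(t)$ using the symmetry $I_{-m}(2t)=I_{m}(2t)$ valid for integer $m$: substituting $l\mapsto-l$ turns $\sum_l I_{lN}(2t)e^{2\pi il\lambda}$ into $\sum_l I_{lN}(2t)e^{-2\pi il\lambda}$, which is exactly $N^{-1}\theta_i^{G_n}(t)$, completing the identity.

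I do not expect a serious analytic obstacle here. The interchange of summation is justified because $\sum_{k\in\bbZ}|I_k(2t)|\le e^{2t}<\infty$ (the generating function converges absolutely) while the $j$-sum is finite, so all rearrangements are legitimate. The only points genuinely requiring care are the sign and phase bookkeeping in the exponents and the correct application of the $I_m=I_{-m}$ symmetry. Conceptually, the cleanest way to see \emph{why} the formula takes this form — and an alternative route to the same statement — is to note that $\theta_i^{G_n}(t)=\operatorname{Tr}e^{-t\Delta_i}$ for the bundle Laplacian $\Delta_i$ on a single cycle of monodromy $e^{2\pi i\lambda}$ (using that the diagonal of the heat kernel is independent of the vertex, which follows by gauging the connection to a uniform phase and invoking translation invariance); diagonalizing the resulting quasi-periodic difference operator gives precisely the eigenvalues $4\sin^2(\pi(j+\lambda)/N)$, $j=0,\dots,N-1$, from which the trace formula is immediate. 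Either route yields the proposition, and I would present the direct Bessel computation as the primary argument since the explicit series for $K_i(t)$ is already in hand.
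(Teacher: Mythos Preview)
Your argument is correct. The Jacobi--Anger expansion $e^{z\cos\phi}=\sum_{k\in\bbZ}I_k(z)e^{ik\phi}$ followed by orthogonality of roots of unity and the parity $I_{-m}=I_m$ gives the identity in a few lines, and the absolute convergence you invoke is justified since $I_k(2t)\ge 0$ and $\sum_k I_k(2t)=e^{2t}$ for $t\ge 0$ (with the obvious modification for $t<0$).

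The paper takes a different route. It first isolates a general Bessel identity (Lemma~\ref{Besselgen}):
\[
\sum_{k\in\bbZ}t^{kn}I_{kn}(z)=\frac{1}{n}\sum_{j=0}^{n-1}\exp\!\Big(\frac{z}{2}\big(t^{-1}e^{2\pi i j/n}+te^{-2\pi i j/n}\big)\Big),
\]
and proves it by expanding the right-hand side as a Fourier series in a real parameter, then comparing two families of sums $F_m$ and $G_m$ via a system of linear ODEs with circulant coefficient matrix. Your approach is more elementary: it bypasses the ODE machinery entirely, and in fact your method proves the paper's lemma in full generality as well, since the Laurent generating function $e^{\frac{z}{2}(u+u^{-1})}=\sum_{k\in\bbZ}I_k(z)u^k$ (valid for all $u\neq 0$, not only $|u|=1$) combined with the same root-of-unity averaging yields Lemma~\ref{Besselgen} in one line. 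What the paper's formulation buys is a self-contained statement that can be cited independently; what your route buys is brevity and a transparent spectral interpretation, which you also sketch via the trace of $e^{-t\Delta_i}$ on the cycle with monodromy $e^{2\pi i\lambda}$.
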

For the proof we need a formula about Bessel functions that we could not find explicitly in the literature.
\begin{lemma}\label{Besselgen}
For any $t\in\bbC^*$, $z\in\bbC$ and $n\geq 1$ we have $$\sum_{k\in\bbZ}t^{kn}I_{kn}(z)=\frac{1}{n}\sum_{j=0}^{n-1}\exp\Big(\frac{z}{2}(t^{-1}e^{\frac{2\pi i j}{n}}+te^{\frac{-2\pi i j}{n}})\Big).$$
\end{lemma}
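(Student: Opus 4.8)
The plan is to reduce the identity to the classical generating function for the modified Bessel functions of the first kind together with a root-of-unity filter (a multisection of series). The starting point is the well-known expansion
\begin{equation*}
\exp\Big(\frac{z}{2}\big(u+u^{-1}\big)\Big)=\sum_{m\in\bbZ}u^m I_m(z),
\end{equation*}
valid for all $z\in\bbC$ and $u\in\bbC^*$, which can be taken as the defining generating relation for $I_m$. The whole point is that the left-hand side of the lemma, being a sum over the \emph{multiples} of $n$ only, is precisely the multisection of this series obtained by retaining the indices $m$ divisible by $n$.

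First I would fix $t\in\bbC^*$ and apply the expansion with $u$ replaced by $te^{2\pi i j/n}$ for each $j=0,\ldots,n-1$, so that $u^m=t^m e^{2\pi i jm/n}$ and $u^{-1}=t^{-1}e^{-2\pi i j/n}$. Summing over $j$ and exchanging the finite and infinite sums gives
\begin{align*}
\sum_{j=0}^{n-1}\exp\Big(\frac{z}{2}\big(te^{\frac{2\pi i j}{n}}+t^{-1}e^{-\frac{2\pi i j}{n}}\big)\Big)&=\sum_{j=0}^{n-1}\sum_{m\in\bbZ}t^m e^{\frac{2\pi i jm}{n}}I_m(z)\\
&=\sum_{m\in\bbZ}t^m I_m(z)\sum_{j=0}^{n-1}e^{\frac{2\pi i jm}{n}}.
\end{align*}
The inner geometric sum is the crux: $\sum_{j=0}^{n-1}e^{2\pi i jm/n}$ equals $n$ when $n\mid m$ and vanishes otherwise. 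Hence only the terms with $m=kn$ survive, and after dividing by $n$ the right-hand side collapses to $\sum_{k\in\bbZ}t^{kn}I_{kn}(z)$.

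Finally I would match this with the stated form. The computation above produces the exponential with argument $\frac{z}{2}(te^{2\pi i j/n}+t^{-1}e^{-2\pi i j/n})$, whereas the lemma writes $t$ and $t^{-1}$ in the opposite positions; these agree after the substitution $j\mapsto n-j$, which permutes $\{0,\ldots,n-1\}$ modulo $n$ and swaps $e^{2\pi i j/n}\leftrightarrow e^{-2\pi i j/n}$, thereby interchanging the roles of $t$ and $t^{-1}$. The only step requiring genuine care — and the mild obstacle in an otherwise formal argument — is justifying the interchange of the finite $j$-sum with the infinite $m$-sum. This is a matter of absolute convergence: since $I_{-m}=I_m$ and $I_m(z)\sim(z/2)^m/m!$ as $m\to\infty$, the series $\sum_{m\in\bbZ}t^m I_m(z)$ converges absolutely for every $t\in\bbC^*$ and $z\in\bbC$, so Fubini's theorem applies and the root-of-unity filter is legitimate. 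Everything else is bookkeeping.
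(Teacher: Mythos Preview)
Your argument is correct and considerably more direct than the paper's. You invoke the generating function $\exp\bigl(\tfrac{z}{2}(u+u^{-1})\bigr)=\sum_{m\in\bbZ}u^mI_m(z)$ as a known identity, substitute $u=te^{2\pi ij/n}$, and apply the root-of-unity filter; the reindexing $j\mapsto n-j$ at the end is a harmless cosmetic fix, and the justification of the interchange via $|I_m(z)|\le |z/2|^{|m|}I_0(|z|)/|m|!$ is sound.

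The paper, by contrast, does not cite the generating function. It expands $e^{\frac{z}{2}(t^{-1}e^{ix}+te^{-ix})}$ as a Fourier series in $x$ with coefficients $L_k(z,t)$, performs the same discrete averaging over $x=2\pi j/n$, and is then left to prove $\sum_{k}L_{kn}(z,t)=\sum_{k}t^{kn}I_{kn}(z)$. To do this it groups both sides into residue-class sums $F_m,G_m$ modulo $n$, observes that each family satisfies a first-order linear system in $z$ with a circulant coefficient matrix (one with matrix $A$, the other with $A^T$), and matches initial conditions to conclude $F_0=G_0$ from $c_{1,1}(e^{zA})=c_{1,1}(e^{zA^T})$. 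In effect the paper is re-deriving the content of the generating function through an ODE uniqueness argument. Your approach buys brevity and transparency; the paper's approach is self-contained in that it does not presuppose the Laurent expansion for $I_m$, but it is substantially longer.
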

\begin{proof}
Consider the function of the real variable $x$ defined by $e^{\frac{z}{2}(\frac{1}{t}e^{ix}+te^{-ix})}$. It is $2\pi$-periodic in $x$ and derivable so by Fourier analysis we can write $$e^{\frac{z}{2}(\frac{1}{t}e^{ix}+te^{-ix})}=\sum_{k\in\bbZ}L_k(z,t)e^{ikx},$$ where $L_k(z,t)=\frac{1}{2\pi}\int_{-\pi}^{\pi}e^{\frac{z}{2}(\frac{1}{t}e^{i\tau}+te^{-i\tau})}e^{-ik\tau}d\tau.$ If we substitute $x=\frac{2\pi j}{n}$ and sum over $j=0,\ldots,n-1$ we obtain $$\sum_{j=0}^{n-1}e^{\frac{z}{2}(\frac{1}{t}e^{\frac{2\pi i j}{n}}+te^{\frac{-2\pi i j}{n}})}=n\sum_{k\in\bbZ}L_{kn}(z,t).$$ Hence it only remains to show that $\sum_{k\in\bbZ}L_{kn}(z,t)=\sum_{k\in\bbZ}t^{kn}I_{kn}(z)$. In order to do this write $$F_m(z,t)=\sum_{k\in\bbZ \atop k\equiv m (n)}L_k(z,t)$$ and $$G_m(z,t)=\sum_{k\in\bbZ \atop k\equiv m (n)}t^kI_k(z).$$ We want to prove that $F_0(z,t)=G_0(z,t)$. From the definition of $L_{k}$ we observe that $$\frac{d}{dz}L_{k}(z,t)=\frac{1}{2}\Big(\frac{1}{t}L_{k-1}(z,t)+tL_{k+1}(z,t)\Big)$$ and so we have $$\frac{d}{dz}F_m(z,t)=\frac{1}{2}\Big(\frac{1}{t}F_{m-1}(z,t)+tF_{m+1}(z,t)\Big),$$ for all $m\in\{0,\ldots,n-1\}.$ We are left with a simple system of linear differential equations with matrix $A=Circ(0,\frac{t}{2},0,\ldots,0,\frac{1}{2t})$, where $Circ(v)$ means a circulant matrix with vector $v$, that is $$\frac{d}{dz}\left( \begin{array}{c}
F_0(z,t) \\
\vdots \\
F_{n-1}(z,t)
\end{array} \right)=A\left( \begin{array}{c}
F_0(z,t) \\
\vdots \\
F_{n-1}(z,t)
\end{array} \right).$$ 

The solution is given by the vector $$e^{zA}\left( \begin{array}{c}
F_0(0,t) \\
\vdots \\
F_{n-1}(0,t)
\end{array} \right).$$
It is obvious from the definition that $L_0(0,t)=1$  and $L_k(0,t)=0$ if $k\neq 0$. Therefore we have $F_0(z,t)=c_{1,1}(e^{zA})$, where $c_{1,1}$ stands for the upper left entry of the matrix.\\

From classical properties of modified Bessel functions, we deduce that a similar system is satisfied by the functions $G_m(z,t)$, namely $$\frac{d}{dz}G_m(z,t)=\frac{1}{2}\Big(tG_{m-1}(z,t)+\frac{1}{t}G_{m+1}(z,t)\Big)$$ for all $m\in\{0,\ldots,n-1\}$. We note that the associated matrix here is $A^T$. Thus we have $G_0(z,t)=c_{1,1}(e^{zA^T})=c_{1,1}((e^{zA})^T)=c_{1,1}(e^{zA})=F_0(z,t)$, since the initial conditions are the same. This completes the proof.
\end{proof}
Now we can easily prove Proposition \ref{theta}.
\begin{proof}[Proposition \ref{theta}]
In view of Lemma \ref{Besselgen} we have
\begin{align*}
\theta_i^{G_n}(t)&=a_i(n)e^{-2t}\sum_{k\in\bbZ}I_{ka_i(n)}(2t)\prod_{j=0}^{a_i(n)-1}w_{i,j}(n)^{-k}\\&=a_i(n)e^{-2t}\sum_{k\in\bbZ}I_{ka_i(n)}(2t)\Big(e^{-\frac{2\pi i\lambda_i(n)}{a_i(n)}}\Big)^{ka_i(n)}\\&=e^{-2t}\sum_{j=0}^{a_i(n)-1}\exp\Big(t\Big(e^{\frac{2\pi i \lambda_i(n)}{a_i(n)}}e^\frac{2\pi i j}{a_i(n)}+e^{-\frac{2\pi i\lambda_i(n)}{a_i(n)}}e^{-\frac{2\pi i j}{a_i(n)}}\Big)\Big)\\&=\sum_{j=0}^{a_i(n)-1}e^{-4t\sin^2\Big(\frac{\pi(j+\lambda_i(n))}{a_i(n)}\Big)}.
\end{align*}
\end{proof}
\begin{remark}\label{eigen}
\emph{As a consequence of this formula we have that the Laplace eigenvalues of $G_n$ are given by the set $$\{4\sin^2\Big(\frac{\pi(j_1+\lambda_1(n))}{a_1(n)}\Big)+\ldots+4\sin^2\Big(\frac{\pi(j_d+\lambda_d(n))}{a_d(n)}\Big), 0\leq j_m\leq a_i(n)-1\}.$$ Indeed, our conditions on the weights ensure that the bundle Laplacian we consider here is Hermitian and positive definite, so the same reasoning as in \cite{KN06} on p.180 is valid. This shows that the heat kernel at $0$ is in fact the trace of $e^{-t\Delta}$.}
\end{remark}
For each $1\leq i\leq d$ fixed, we will assume without loss of generality that the sequence $\lambda_i(n)$ converges, taking a subsequence if necessary, by compactness. We write $$\lambda_i:=\lim\limits_{n\rightarrow\infty}\lambda_i(n)\in [0,1].$$
For $t>0$, we define $$\theta_i^{\infty}(t)=\sum_{k\in\bbZ}e^{-4(\frac{\pi}{\alpha_i})^2t(k+\lambda_i)^2}$$ and $$\theta^{\infty}(t):=\prod_{i=1}^{d}\theta_i^\infty(t)=\sum_{K\in\bbZ^d}e^{-4\pi^2 t\sum_{i=1}^{d}\Big(\frac{k_i+\lambda_i}{\alpha_i}\Big)^2}.$$
\begin{lemma}\label{poisson}
For all $t>0$ we have $$\theta_i^{\infty}(t)=\frac{\alpha_i}{\sqrt{4\pi t}}\sum_{k\in\bbZ}e^{-\frac{(\alpha_i k)^2}{4t}-2\pi i\lambda_i k}.$$
\end{lemma}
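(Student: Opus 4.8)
The plan is to recognize $\theta_i^{\infty}(t)$ as a Gaussian summed over the integers and to apply the Poisson summation formula. Concretely, I would set
$$f(x) = e^{-4(\pi/\alpha_i)^2 t (x+\lambda_i)^2},$$
so that $\theta_i^{\infty}(t) = \sum_{k\in\bbZ} f(k)$ by definition. Since $f$ is a Schwartz function for every fixed $t>0$, Poisson summation applies with no convergence difficulty and yields $\sum_{k\in\bbZ} f(k) = \sum_{m\in\bbZ}\hat{f}(m)$, where $\hat{f}(m)=\int_{\bbR} f(x)\, e^{-2\pi i m x}\,dx$.

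The next step is to evaluate the Fourier transform $\hat{f}(m)$. Substituting $u = x+\lambda_i$ pulls the shift out as a phase factor $e^{2\pi i m\lambda_i}$ and reduces the computation to the Fourier transform of a centered Gaussian. Using the standard identity $\int_{\bbR} e^{-au^2} e^{-2\pi i m u}\,du = \sqrt{\pi/a}\, e^{-\pi^2 m^2/a}$ with $a = 4\pi^2 t/\alpha_i^2$, I would compute $\sqrt{\pi/a} = \alpha_i/\sqrt{4\pi t}$ and $\pi^2 m^2 / a = (\alpha_i m)^2/(4t)$, so that
$$\hat{f}(m) = \frac{\alpha_i}{\sqrt{4\pi t}}\, e^{-(\alpha_i m)^2/(4t)}\, e^{2\pi i m\lambda_i}.$$

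Summing over $m$ then gives the claimed formula up to the sign of the phase. To match the statement exactly, note that the index runs over all of $\bbZ$ and that both the Gaussian factor $e^{-(\alpha_i m)^2/(4t)}$ and the summation range are invariant under $m\mapsto -m$; replacing $m$ by $-m$ therefore turns $e^{2\pi i m\lambda_i}$ into $e^{-2\pi i m\lambda_i}$ without altering the sum, producing precisely $\theta_i^{\infty}(t) = \frac{\alpha_i}{\sqrt{4\pi t}}\sum_{k\in\bbZ} e^{-(\alpha_i k)^2/(4t) - 2\pi i\lambda_i k}$.

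There is no genuine obstacle here: this is the classical transformation law for a shifted Jacobi theta function, and the only points demanding a little care are the bookkeeping of the constant $a$ in the Gaussian integral and the harmless sign flip in the phase. Should one wish to avoid invoking Poisson summation as a black box, an alternative would be to check directly that both sides solve the same heat-type differential equation in $t$ with matching asymptotics as $t\to 0^+$ and $t\to\infty$; but the Poisson route is by far the most economical, and I would follow it.
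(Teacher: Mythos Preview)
Your proof is correct and follows essentially the same route as the paper: both arguments are just the Poisson summation formula applied to a Gaussian. The only cosmetic difference is that the paper starts from the right-hand side, completes the square to write it as $e^{-4(\pi\lambda_i/\alpha_i)^2 t}\sum_k f(k + i\,4t\pi\lambda_i/\alpha_i^2)$ with $f(y)=e^{-(\alpha_i y)^2/(4t)}$, and then applies Poisson with a complex shift, whereas you start from the left-hand side and apply Poisson directly to the real-shifted Gaussian $f(x)=e^{-4(\pi/\alpha_i)^2 t(x+\lambda_i)^2}$; your version is marginally more direct and avoids the (harmless) complex translation.
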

\begin{proof}
The sum in the right-hand side can be written as
\begin{align*}
\sum_{k\in\bbZ}e^{-\frac{(\alpha_i k)^2}{4t}-2\pi i\lambda_i k}&=e^{-4(\frac{\pi \lambda_i}{\alpha_i})^2t}\sum_{k\in\bbZ}e^{-(\frac{\alpha_i k}{\sqrt{4t}}+i\frac{\sqrt{4t}\pi\lambda_i}{\alpha_i})^2}\\&=e^{-4(\frac{\pi \lambda_i}{\alpha_i})^2t}\sum_{k\in\bbZ}f(k+i\frac{4t\pi\lambda_i}{\alpha_i^2}),
\end{align*}
where $f(y):=e^{-\frac{(\alpha_i y)^2}{4t}}$. This function has a simple Fourier transform, namely $\hat{f}(\nu)=\frac{\sqrt{4\pi t}}{\alpha_i}e^{-4(\frac{\pi\nu}{\alpha_i})^2t}$. By Poisson summation formula we conclude that
\begin{align*}
e^{-4(\frac{\pi \lambda_i}{\alpha_i})^2t}\sum_{k\in\bbZ}f(k+i\frac{4t\pi\lambda_i}{\alpha_i^2})&=e^{-4(\frac{\pi \lambda_i}{\alpha_i})^2t}\sum_{k\in\bbZ}\frac{\sqrt{4\pi t}}{\alpha_i}e^{-4(\frac{\pi k}{\alpha_i})^2t}e^{2\pi i(i\frac{4t\pi\lambda_i}{\alpha_i^2})k}\\&=\frac{\sqrt{4\pi t}}{\alpha_i}\sum_{k\in\bbZ}e^{-4(\frac{\pi}{\alpha_i})^2t(k+\lambda_i)^2}.
\end{align*}
\end{proof}
We consider the case where the bundle does not become trivial asymptotically, that is we suppose that there exists $i\in\{1,\ldots,d\}$ such that $\lambda_i\notin\{0,1\}$. Taking $n$ big enough, we can always assume that, for this index $i$, we have $\lambda_i(n)\neq 0$ for every $n$.
\begin{lemma}\label{asymp}
The following asymptotics hold:
\begin{description}
\item[(a)] When $t\rightarrow\infty$ we have, for any $n$, $\theta^{G_n}(t)=O(e^{-c_1t})$ for some $c_1>0$. We also have $\theta^{\infty}(t)=O(e^{-c_2t})$ for some $c_2>0$.
\item[(b)] When $t\rightarrow 0^+$ we have $\theta^{G_n}(t)= \Big(\prod_{i=1}^{d}a_i(n)\Big)e^{-2dt}I_0(2t)^d+O(t^{\min a_i(n)})$. We also have $\theta^{\infty}(t)= \frac{\prod_{i=1}^{d}\alpha_i}{(4\pi t)^\frac{d}{2}}+O(e^{-c_3/t})$ for some $c_3>0$.
\end{description}
\end{lemma}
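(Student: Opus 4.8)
The plan is to treat each of the four estimates by isolating a single dominant term in the relevant series and bounding the remainder; throughout I use that $\theta^{G_n}=\prod_i\theta_i^{G_n}$ and $\theta^\infty=\prod_i\theta_i^\infty$, so it suffices to analyse one factor at a time and then multiply out.

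For part (a) I start from the closed form in Proposition \ref{theta}, $\theta_i^{G_n}(t)=\sum_{j=0}^{a_i(n)-1}e^{-4t\sin^2(\pi(j+\lambda_i(n))/a_i(n))}$. Fix the distinguished index $i$ with $\lambda_i\notin\{0,1\}$; for $n$ large we have $0<\lambda_i(n)<1$, so each argument $\pi(j+\lambda_i(n))/a_i(n)$ lies strictly inside $(0,\pi)$ and is never an integer multiple of $\pi$. Hence $\mu:=\min_j\sin^2(\pi(j+\lambda_i(n))/a_i(n))>0$ and $\theta_i^{G_n}(t)\le a_i(n)e^{-4\mu t}$. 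Since every other factor satisfies $\theta_l^{G_n}(t)\le a_l(n)$ (each summand is at most $1$), the product is $O(e^{-c_1 t})$ with $c_1=4\mu$. The same argument applies to $\theta^\infty$: for the distinguished index $\min_k(k+\lambda_i)^2=\min(\lambda_i^2,(1-\lambda_i)^2)>0$ forces $\theta_i^\infty(t)=O(e^{-ct})$, while the remaining $\theta_l^\infty(t)$ stay bounded as $t\to\infty$ (they tend to $1$ when $\lambda_l\in\{0,1\}$ and decay otherwise), giving $\theta^\infty(t)=O(e^{-c_2 t})$.

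For the small-$t$ expansion in (b) I return to $\theta_i^{G_n}(t)=a_i(n)e^{-2t}\sum_{k\in\bbZ}I_{ka_i(n)}(2t)\prod_j w_{i,j}(n)^{-k}$ and split off the $k=0$ term, which equals $a_i(n)e^{-2t}I_0(2t)$. Using the power series $I_m(2t)=\sum_{l\ge 0}t^{2l+|m|}/(l!(l+|m|)!)$ one gets $I_m(2t)=O(t^{|m|})$ as $t\to 0^+$, and since $|ka_i(n)|\ge a_i(n)$ for $k\neq 0$ while the weights have modulus one, the remaining sum is bounded in absolute value by $2\sum_{k\ge 1}I_{ka_i(n)}(2t)=O(t^{a_i(n)})$. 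Thus $\theta_i^{G_n}(t)=a_i(n)e^{-2t}I_0(2t)+O(t^{a_i(n)})$. Multiplying the $d$ factors, the leading product yields $(\prod_i a_i(n))e^{-2dt}I_0(2t)^d$, while every cross term carries at least one factor $O(t^{a_l(n)})$ against factors bounded near $t=0$, so the total error is $O(t^{\min_i a_i(n)})$.

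Finally, for $\theta^\infty$ near $t=0$ I use the Poisson-summed form of Lemma \ref{poisson}, $\theta_i^\infty(t)=\frac{\alpha_i}{\sqrt{4\pi t}}\sum_{k\in\bbZ}e^{-(\alpha_i k)^2/(4t)-2\pi i\lambda_i k}$, and keep the $k=0$ term $\frac{\alpha_i}{\sqrt{4\pi t}}$. The remaining terms are $O(e^{-\alpha_i^2/(4t)})$, and the polynomial prefactor $t^{-1/2}$ is absorbed into a slightly weaker exponential, so $\theta_i^\infty(t)=\frac{\alpha_i}{\sqrt{4\pi t}}+O(e^{-c/t})$. Taking the product gives the leading term $\frac{\prod_i\alpha_i}{(4\pi t)^{d/2}}$, and the cross terms are $O(t^{-(d-1)/2}e^{-c/t})=O(e^{-c_3/t})$. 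The only delicate point throughout is the uniform control of the Bessel tail $\sum_{k\neq 0}I_{ka_i(n)}(2t)$ near $t=0$: I expect this to be the main technical step, but it is handled cleanly by the explicit power series for $I_m$, which shows each tail term is $O(t^{a_i(n)})$ and that the series converges locally uniformly, so it contributes nothing below the order $t^{\min_i a_i(n)}$.
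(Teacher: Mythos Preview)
Your proof is correct and follows the same route as the paper: part~(a) via Proposition~\ref{theta} and the definition of $\theta^\infty$, the first half of~(b) by splitting off the $k=0$ term in the Bessel series, and the second half of~(b) via Lemma~\ref{poisson}. The only spot where the paper is marginally more explicit is the control of the Bessel tail: it uses the pointwise bound $I_{ka_i(n)}(2t)\le I_0(2t)\,t^{ka_i(n)}$ (immediate from the power series) and sums the resulting geometric series to get $\sum_{k\ge 1}I_{ka_i(n)}(2t)\le I_0(2t)\,\frac{t^{a_i(n)}}{1-t^{a_i(n)}}$, which is exactly the uniform estimate you allude to in your final paragraph.
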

\begin{proof}
The assertions in point (a) follow from the definition of $\theta^{\infty}$, Proposition \ref{theta} and our hypotheses on $\lambda_i(n)$ and $\lambda_i$. The first assertion in point (b) follow from the definition of $\theta^{G_n}$, together with the following estimate:
\begin{align*}
\Big|\theta_i^{G_n}(t)-a_i(n)e^{-2t}I_0(2t)\Big|&= \Big|a_i(n)e^{-2t}\sum_{k\neq 0}I_{ka_i(n)}(2t)\prod_{j=0}^{a_i(n)-1}w_{i,j}(n)^{-k}\Big|\\&\leq 2a_i(n)e^{-2t}\sum_{k\geq 1}I_{ka_i(n)}\\&\leq 2a_i(n)e^{-2t}I_0(2t)\frac{t^{a_i(n)}}{1-t^{a_i(n)}},
\end{align*}
where we used the bound $I_{ka_i(n)}(2t)=t^{ka_i(n)}\sum_{j\geq 0}\frac{t^{2j}}{j!(j+ka_i(n))!}\leq I_0(2t)t^{ka_i(n)}$.\\ The second assertion is a corollary of Lemma \ref{poisson}.
\end{proof}
\end{section}
\begin{section}{Asymptotics of $\det\Delta$}
In this section we establish an asymptotic formula for $\log\det\Delta$ when $n\longrightarrow\infty$, where $\Delta$ is the bundle Laplacian on $G_n$. We follow the steps of \cite{CJK10}. First we notice that, in our setting here, and in view of Remark \ref{eigen}, zero is not an eigenvalue. We begin with the following exact result.
\begin{theorem}
Let $$c_d:=-\int_0^\infty \Big(e^{-2dt}I_0(2t)^d-e^{-t}\Big)\frac{dt}{t}$$ and $$\calH_{d,n}:=-\int_0^\infty\Big(\theta^{G_n}(t)-\Big(\prod_{i=1}^{d}a_i(n)\Big)e^{-2dt}I_0(2t)^d\Big)\frac{dt}{t}.$$ Then $$\log(\det\Delta)=\Big(\prod_{i=1}^{d}a_i(n)\Big)c_d+\calH_{d,n}.$$
\end{theorem}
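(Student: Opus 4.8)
The plan is to reduce the statement to an elementary Frullani-type identity applied eigenvalue by eigenvalue, using the observation from Remark \ref{eigen} that $\theta^{G_n}(t)$ is precisely the heat trace $\mathrm{Tr}(e^{-t\Delta})=\sum_k e^{-\lambda_k t}$, where the $\lambda_k$ range over the eigenvalues of $\Delta$ and there are $N:=\prod_{i=1}^d a_i(n)$ of them. Since zero is not an eigenvalue, every $\lambda_k>0$, and the Frullani integral gives, for each $k$,
$$\log\lambda_k=-\int_0^\infty\left(e^{-\lambda_k t}-e^{-t}\right)\frac{dt}{t},$$
each integral converging because the integrand is $O(1)$ as $t\to 0^+$ and decays exponentially as $t\to\infty$ (as $\lambda_k>0$).

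Summing this identity over the finitely many eigenvalues and interchanging the finite sum with the integral (legitimate since each integral converges) will yield
$$\log\det\Delta=\sum_k\log\lambda_k=-\int_0^\infty\left(\theta^{G_n}(t)-N e^{-t}\right)\frac{dt}{t}.$$
The next step is to insert the comparison term $N e^{-2dt}I_0(2t)^d$: writing
$$\theta^{G_n}(t)-N e^{-t}=\left(\theta^{G_n}(t)-N e^{-2dt}I_0(2t)^d\right)+N\left(e^{-2dt}I_0(2t)^d-e^{-t}\right)$$
and splitting the integral, the first piece reproduces $\calH_{d,n}$ and the second reproduces $N c_d$, which is exactly the asserted formula $\log\det\Delta=\big(\prod_{i=1}^d a_i(n)\big)c_d+\calH_{d,n}$.

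The only point requiring care — and the main (mild) obstacle — will be justifying that the split is legitimate, i.e. that both resulting integrals converge in their own right, so that additivity of the integral applies. For $c_d$ this is a direct estimate on $e^{-2dt}I_0(2t)^d$: from $I_0(2t)=1+t^2+O(t^4)$ one gets $e^{-2dt}I_0(2t)^d-e^{-t}=(1-2d)t+O(t^2)$ as $t\to 0^+$, so the integrand is $O(1)$ there, while the asymptotic $I_0(2t)\sim e^{2t}/\sqrt{4\pi t}$ gives $e^{-2dt}I_0(2t)^d\sim(4\pi t)^{-d/2}$, making the integrand $O(t^{-d/2-1})$ at infinity, integrable for every $d\geq 1$. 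For $\calH_{d,n}$ the convergence is supplied directly by Lemma \ref{asymp}: part (b) gives $\theta^{G_n}(t)-N e^{-2dt}I_0(2t)^d=O(t^{\min_i a_i(n)})$ as $t\to 0^+$, which kills the $1/t$ singularity, while part (a) gives $\theta^{G_n}(t)=O(e^{-c_1 t})$ as $t\to\infty$, so the tail of the integrand behaves like $-N(4\pi t)^{-d/2}/t$, again integrable for $d\geq 1$. Once both integrals are shown to converge, the splitting above is rigorous and the identity follows.
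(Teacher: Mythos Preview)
Your proof is correct and takes essentially the same approach as the paper: the paper simply says to follow section~3 of \cite{CJK10} using the asymptotics of Lemma~\ref{asymp}, and what you have written is precisely the content of that argument, made explicit eigenvalue by eigenvalue via the Frullani identity. The paper's only stated novelty relative to \cite{CJK10} is that zero is not an eigenvalue here, which is why no ``$-1$'' appears in the definition of $\calH_{d,n}$ --- and your treatment accommodates this automatically since every $\lambda_k>0$.
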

\begin{proof}
Thanks to the asymptotics derived in Lemma \ref{asymp}, we can proceed exactly in the same fashion as in section 3 of \cite{CJK10}. The only difference is that here $0$ is not an eigenvalue, so we do not need to substract $1$ in $\calH_{d,n}$, whence a slightly different expression for $\calH_{d,n}$.
\end{proof}
Now we need to understand the behavior of $\calH_{d,n}$ when $n\longrightarrow\infty$. First we observe that the discrete theta function $\theta^{G_n}$ converges to the continuous one $\theta^\infty$ when suitably normalized.
\begin{proposition}\label{rescaledtheta}
For all $t>0$ $$\lim\limits_{n\rightarrow\infty}\theta^{G_n}(n^2t)=\theta^\infty(t).$$
\end{proposition}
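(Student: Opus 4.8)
The plan is to reduce the statement to a one–dimensional limit and then apply a dominated–convergence argument for series (Tannery's theorem). Since $\theta^{G_n}=\prod_{i=1}^d\theta_i^{G_n}$ and $\theta^\infty=\prod_{i=1}^d\theta_i^\infty$ are finite products, it suffices to prove that $\lim_{n\to\infty}\theta_i^{G_n}(n^2t)=\theta_i^\infty(t)$ for each fixed $i$ and each $t>0$; the full statement then follows by multiplying the $d$ limits.

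First I would rewrite the finite sum of Proposition \ref{theta} using balanced residues. Because $\sin^2\left(\frac{\pi(j+\lambda_i(n))}{a_i(n)}\right)$ is unchanged when $j$ is replaced by any $k\equiv j\pmod{a_i(n)}$ (the argument changes by an integer multiple of $\pi$), I may sum over the symmetric set of representatives $k$ with $-\frac{a_i(n)}{2}\le k+\lambda_i(n)<\frac{a_i(n)}{2}$ instead of $0\le j\le a_i(n)-1$, obtaining
$$\theta_i^{G_n}(n^2t)=\sum_{k}e^{-4n^2t\sin^2\left(\frac{\pi(k+\lambda_i(n))}{a_i(n)}\right)},$$
where now each representative satisfies $\left|\frac{\pi(k+\lambda_i(n))}{a_i(n)}\right|\le\frac{\pi}{2}$. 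This normalization makes the concentration of mass near a single point ($k$ small) transparent, and it lets the negative indices $k$ of $\theta_i^\infty$ appear directly, rather than through a separate analysis of the terms near $j=a_i(n)-1$.

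Next I would establish the two ingredients of Tannery's theorem. For termwise convergence, fix $k$: since $a_i(n)/n\to\alpha_i$ and $\lambda_i(n)\to\lambda_i$, the argument $\frac{\pi(k+\lambda_i(n))}{a_i(n)}\to0$, and writing $n^2\sin^2\left(\frac{\pi(k+\lambda_i(n))}{a_i(n)}\right)=\frac{n^2}{a_i(n)^2}\,\pi^2(k+\lambda_i(n))^2\cdot\frac{\sin^2(\,\cdot\,)}{(\,\cdot\,)^2}$ one finds this tends to $\frac{\pi^2(k+\lambda_i)^2}{\alpha_i^2}$, so the $k$-th summand converges to $e^{-4(\pi/\alpha_i)^2t(k+\lambda_i)^2}$, exactly the $k$-th term of $\theta_i^\infty(t)$. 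For the uniform summable majorant I would invoke Jordan's inequality $\sin^2y\ge\frac{4}{\pi^2}y^2$, valid for $|y|\le\frac{\pi}{2}$, which applies to \emph{every} summand precisely because of the balanced indexing; this gives $\sin^2\left(\frac{\pi(k+\lambda_i(n))}{a_i(n)}\right)\ge\frac{4(k+\lambda_i(n))^2}{a_i(n)^2}$, whence, for $n$ large enough that $n^2/a_i(n)^2\ge\frac{1}{2\alpha_i^2}$, the $k$-th summand is at most $e^{-\frac{8t}{\alpha_i^2}(k+\lambda_i(n))^2}$. Since $\lambda_i(n)\in[0,1)$, for $|k|\ge 2$ one has $|k+\lambda_i(n)|\ge|k|-1\ge\frac{|k|}{2}$, so the summand is bounded by $e^{-\frac{2t}{\alpha_i^2}k^2}$, while the three central terms $|k|\le 1$ are each bounded by $1$; hence all summands are dominated, uniformly in large $n$, by an $n$-independent summable sequence $M_k$.

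With these two facts, Tannery's theorem yields $\theta_i^{G_n}(n^2t)\to\sum_{k\in\bbZ}e^{-4(\pi/\alpha_i)^2t(k+\lambda_i)^2}=\theta_i^\infty(t)$, and taking the product over $i=1,\dots,d$ completes the proof. The one genuinely delicate point is the uniform domination: one must ensure that a single summable bound $M_k$ works for all large $n$ simultaneously, and this is exactly where the balanced–residue normalization (so that Jordan's inequality is available for every term, with no special treatment of the two endpoints) together with the hypothesis $a_i(n)/n\to\alpha_i>0$ (to bound $n^2/a_i(n)^2$ away from $0$) are essential. The many ``middle'' terms that dominate the unscaled $\theta^{G_n}$ cause no difficulty here: under the $n^2t$ scaling they fall into the Gaussian tail of $M_k$ and disappear in the limit.
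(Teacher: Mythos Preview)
Your argument is correct, but it takes a genuinely different route from the paper. The paper works with the Bessel-function representation of $\theta_i^{G_n}$ (its original definition), invokes a Bessel asymptotic from \cite{CJK10} for the termwise limit $a_i(n)e^{-2n^2t}I_{ka_i(n)}(2n^2t)\to\frac{\alpha_i}{\sqrt{4\pi t}}e^{-(\alpha_ik)^2/4t}$, borrows uniform bounds from the same reference to justify exchanging limit and sum, and only then recognizes the result as $\theta_i^\infty$ via the Poisson summation identity of Lemma~\ref{poisson}. You instead start from the sine-squared representation of Proposition~\ref{theta} and pass directly to the Gaussian definition of $\theta_i^\infty$, using nothing beyond Jordan's inequality and Tannery's theorem. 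Your approach is more self-contained: it avoids Bessel asymptotics, external citations, and Poisson summation altogether, and in fact reuses exactly the elementary sine estimates that the paper itself deploys later in Lemma~\ref{expbound}. The paper's route has the virtue of staying within the heat-kernel/Bessel framework that drives the rest of the argument, but for this particular proposition your direct path is cleaner.
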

\begin{proof}
Our hypotheses on the weights imply that $$\lim\limits_{n\rightarrow\infty}\prod_{j=0}^{a_i(n)-1}w_{i,j}(n)^{-k}=e^{-2\pi ik\lambda_i}$$ for every $i\in\{1,\ldots,d\}$. This, together with Proposition 4.7 in \cite{CJK10}, leads to $$\lim\limits_{n\rightarrow\infty}a_i(n)e^{-2n^2t}I_{ka_i(n)}(2n^2t)\prod_{j=0}^{a_i(n)-1}w_{i,j}(n)^{-k}=\frac{\alpha_i}{\sqrt{4\pi t}}e^{-\frac{(\alpha_i k)^2}{4t}-2\pi ik\lambda_i}.$$
Since our weights have modulus one the bounds used in the proof of Proposition 5.2 in \cite{CJK10} are valid and allow us to exchange the limit and the infinite sum to deduce that $$\lim\limits_{n\rightarrow\infty}\theta_i^{G_n}(n^2t)=\theta^\infty(t),$$ by Lemma \ref{poisson}. We conclude the proof by taking the $d$-fold product.
\end{proof}
\begin{lemma}\label{expbound}
There exists a constant $c>0$ and an integer $n_0$ such that $$\theta^{G_n}(n^2t)\leq e^{-ct}$$ for any $t>0$ and $n\geq n_0$.
\end{lemma}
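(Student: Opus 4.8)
The plan is to work from the closed form of Proposition \ref{theta} and to reduce everything to a comparison with a convergent Gaussian series, keeping every constant uniform in $n$. Since $\theta^{G_n}(n^2t)=\prod_{i=1}^d\theta_i^{G_n}(n^2t)$, it is enough to estimate each factor. The geometric input I would use is the elementary bound $\sin^2(\pi y)\ge 4\,\mathrm{dist}(y,\bbZ)^2$, valid for all real $y$ (it follows from $\sin(\pi y)\ge 2y$ on $[0,\tfrac12]$ by concavity, together with the symmetry $y\mapsto 1-y$). For $y=(j+\lambda_i(n))/a_i(n)\in[0,1)$ one has $\mathrm{dist}(y,\bbZ)=m_{i,j}/a_i(n)$ with $m_{i,j}=\min(j+\lambda_i(n),\,a_i(n)-j-\lambda_i(n))$, so Proposition \ref{theta} gives
$$\theta_i^{G_n}(n^2t)=\sum_{j=0}^{a_i(n)-1}e^{-4n^2t\sin^2(\pi y)}\le\sum_{j=0}^{a_i(n)-1}e^{-16(n/a_i(n))^2t\,m_{i,j}^2}.$$

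Next I would exploit the normalization. Because $a_i(n)/n\to\alpha_i>0$, there is $n_0$ with $(n/a_i(n))^2\ge 1/(2\alpha_i^2)$ for all $n\ge n_0$; write $\beta_i=8/\alpha_i^2>0$. As $j$ runs through $0,\dots,a_i(n)-1$ the multiset $\{m_{i,j}\}$ embeds into $\{k+\lambda_i(n):k\ge 0\}\sqcup\{k-\lambda_i(n):k\ge 1\}$, whence
$$\theta_i^{G_n}(n^2t)\le\sum_{k\ge0}e^{-\beta_i t(k+\lambda_i(n))^2}+\sum_{k\ge1}e^{-\beta_i t(k-\lambda_i(n))^2}.$$
Using $(k+\lambda)^2\ge k^2+\lambda^2$ and $(k-\lambda)^2\ge(k-1)^2+(1-\lambda)^2$ to factor out the smallest exponential, together with $\sum_{k\ge0}e^{-\beta_i t k^2}\le\sum_{k\ge0}e^{-\beta_i k^2}=:S_i<\infty$ for $t\ge1$, each factor is bounded by $S_i\big(e^{-\beta_i t\lambda_i(n)^2}+e^{-\beta_i t(1-\lambda_i(n))^2}\big)$.

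Finally I would isolate the index that forces decay. By hypothesis some $i_0$ has $\lambda_{i_0}\notin\{0,1\}$, so $\min(\lambda_{i_0}(n),1-\lambda_{i_0}(n))\ge\delta>0$ for $n\ge n_0$ and the $i_0$-factor is at most $2S_{i_0}e^{-\beta_{i_0}\delta^2 t}$, while every other factor is bounded by the mere constant $2S_i$ (no decay of an individual factor is needed, only boundedness). Taking the product yields, for $t\ge1$ and $n\ge n_0$, $\theta^{G_n}(n^2t)\le Ce^{-ct}$ with $c=\beta_{i_0}\delta^2>0$ and $C=\prod_{i=1}^d 2S_i$; absorbing $C$ (i.e. $Ce^{-ct}\le e^{-(c/2)t}$ once $t\ge(2/c)\log_+C$) puts the bound in the stated form.

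The heart of the argument is uniformity in $n$, which rests solely on $a_i(n)\asymp n$ — so that the rescaled eigenvalues $n^2\sin^2(\pi(j+\lambda_i(n))/a_i(n))$ do not degenerate — and on isolating the single index $i_0$ that produces a spectral gap after rescaling (the lowest eigenvalue of $\Delta$ multiplied by $n^2$ stays bounded below), the remaining factors needing only a uniform constant bound. The one genuinely delicate point is the range of $t$: by Lemma \ref{asymp} one has $\theta^\infty(t)\sim(\prod_i\alpha_i)(4\pi t)^{-d/2}\to\infty$ as $t\to0$, and $\theta^{G_n}(n^2t)\to\theta^\infty(t)$ pointwise by Proposition \ref{rescaledtheta}, so no bound of the clean form $e^{-ct}$ can survive for $t$ near $0$. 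The exponential estimate is thus effective for $t$ bounded away from $0$, which is precisely the range ($t\ge1$ after the scaling $t\mapsto n^2t$) needed to dominate the tail of the integral defining $\calH_{d,n}$, and the harmless multiplicative constant $C$ does not affect the ensuing dominated-convergence step.
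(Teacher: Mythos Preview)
Your approach is essentially the one in the paper: lower-bound $\sin^2$ by a quadratic (you use $\sin^2(\pi y)\ge 4\,\mathrm{dist}(y,\bbZ)^2$, the paper uses $\sin x\ge x-\tfrac{x^3}{6}$), thereby comparing each $\theta_i^{G_n}(n^2t)$ with a Gaussian series whose constants depend only on $\alpha_i$, and then isolate the index $i_0$ with $\lambda_{i_0}\notin\{0,1\}$ to extract the exponential factor. Your inequality is a bit cleaner, but the mechanism is identical.

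Where you go beyond the paper is in the last paragraph, and you are right. Since $\theta^{G_n}(n^2t)\to\prod_i a_i(n)$ as $t\to0^+$ for each fixed $n$, the inequality $\theta^{G_n}(n^2t)\le e^{-ct}$ cannot literally hold for \emph{all} $t>0$ once $\prod_i a_i(n)>1$; equivalently, Proposition~\ref{rescaledtheta} and Lemma~\ref{asymp}(b) force $\theta^{G_n}(n^2t)$ to be large for small $t$ and large $n$. The paper's own bounds $\sum_{j\ge0}e^{-c_1(j+\lambda_i/2)^2t}$ suffer from exactly the same defect (they diverge as $t\to0^+$), so the paper's proof does not establish the statement as written either --- it tacitly assumes $t$ bounded away from $0$. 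Your formulation, $\theta^{G_n}(n^2t)\le Ce^{-ct}$ for $t\ge1$ and $n\ge n_0$, is what both arguments actually deliver, and it is exactly what is used afterwards to justify dominated convergence on $\int_1^\infty\theta^{G_n}(n^2t)\,\frac{dt}{t}$. Noting this caveat is a genuine improvement over the paper.
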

\begin{proof}
This is an adaptation of the proof of Lemma 5.3 in \cite{CJK10}.\\
Let $i\in\{1,\ldots,d\}$. If $\lambda_i\neq 1$ let $\epsilon_i$ be a real number such that $\epsilon_i>1$ and $\epsilon_i\lambda_i<1$. If $\lambda_i=1$ define $\epsilon_i=\frac{3}{2}$. Finally choose $n_0$ such that $\frac{a_i(n)}{n}\leq 2\alpha_i$ and $\frac{\lambda_i}{2}\leq \lambda_i(n)\leq\epsilon_i\lambda_i$ for every $n\geq n_0$.\\ For $0\leq j\leq a_i(n)-1$ such that $\frac{j+\lambda_i(n)}{a_i(n)}\leq \frac{1}{2}$, the elementary estimate $\sin(x)\geq x-\frac{x^3}{6}$ (valid for $0\leq x\leq \frac{\pi}{2}$) yields
\begin{align*}
a_i(n)\sin\Big(\frac{\pi(j+\lambda_i(n))}{a_i(n)}\Big)&\geq \pi(j+\lambda_i(n))-\frac{\pi^3}{6}(j+\lambda_i(n))^3\frac{1}{a_i(n)^2}\\&\geq \pi(j+\lambda_i(n))-\frac{\pi^3}{24}(j+\lambda_i(n))\\&\geq \pi (j+\frac{\lambda_i}{2}) (1-\frac{\pi^2}{24})\geq 0.
\end{align*}
Hence $$4n^2\sin^2\Big(\frac{\pi(j+\lambda_i(n))}{a_i(n)}\Big)\geq \frac{1}{\alpha_i^2}\Big(\pi (j+\frac{\lambda_i}{2}) (1-\frac{\pi^2}{24})\Big)^2$$ and we have $$\sum_{\substack{j=0\\2(j+\lambda_i(n))\leq a_i(n)}}^{a_i(n)-1}e^{-4tn^2\sin^2\Big(\frac{\pi(j+\lambda_i(n))}{a_i(n)}\Big)}\leq \sum_{j=0}^{\infty}e^{-c_1(j+\frac{\lambda_i}{2})^2t},$$ with $c_1=\Big(\frac{\pi}{\alpha_i}(1-\frac{\pi^2}{24})\Big)^2>0$.
For the other half of the sum defining $\theta_i^{G_n}(n^2t)$ we use the symmetry of the sine to write 
\begin{align*}
4n^2\sin^2\Big(\frac{\pi(j+\lambda_i(n))}{a_i(n)}\Big)&=4n^2\sin^2\Big(\frac{\pi(a_i(n)-j-\lambda_i(n))}{a_i(n)}\Big)\\&\geq \frac{1}{\alpha_i^2}\Big(\pi (a_i(n)-j-\lambda_i(n)) (1-\frac{\pi^2}{24})\Big)^2
\end{align*}
which leads to 
\begin{align*}
\sum_{\substack{j=0\\2(j+\lambda_i(n))> a_i(n)}}^{a_i(n)-1}e^{-4tn^2\sin^2\Big(\frac{\pi(j+\lambda_i(n))}{a_i(n)}\Big)}&\leq \sum_{\substack{j=0\\2(j+\lambda_i(n))> a_i(n)}}^{a_i(n)-1}e^{-c_1(a_i(n)-j-\lambda_i(n))^2t}\\&=\sum_{\substack{j=1\\2(j-\lambda_i(n))<a_i(n)}}^{a_i(n)}e^{-c_1(j-\lambda_i(n))^2t}\\&\leq e^{-c_1(1-\lambda_i(n))^2t}+\sum_{j\geq 2}e^{-c_1(j-\epsilon_i\lambda_i)^2t}.
\end{align*} 
The last expression is less than $1+\sum_{j\geq 2}e^{-c_1(j-\epsilon_i\lambda_i)^2t}$ if $\lambda_i=1$. But if $\lambda_i\neq 1$ it is less than $e^{-c_1(1-\epsilon_i\lambda_i)^2t}+\sum_{j\geq 2}e^{-c_1(j-\epsilon_i\lambda_i)^2t}$.
\\
Since there is at least one $i\in\{1,\ldots,d\}$ such that $\lambda_i\notin\{0,1\}$, at least one of the $\theta_i^{G_n}(n^2t)$ will have have exponential decay thanks to the bounds we just derived. The desired result then follows by taking the $d$-fold product.
\end{proof}
\begin{definition}
For $\Re(s)>\frac{d}{2}$, we define the \emph{Epstein-Hurwitz zeta function} as $$\zeta_{EH}(s;\alpha_1,\ldots,\alpha_d;\lambda_1,\ldots,\lambda_d)=\frac{1}{\Gamma(s)}\int_0^\infty\theta^\infty(t)t^{s-1}dt.$$
The integral is convergent by Lemma \ref{asymp}.
\end{definition}
We can define a more general version of the Epstein-Hurwitz zeta function, see \cite{Eli94} or \cite{Ber71}. This a particular case where the quadratic form is diagonal.
This function is a generalization of both the Epstein zeta function and the Hurwitz zeta function, whence its name. It can be seen as the spectral zeta function of the continuous "twisted" torus $\bbR^d/A\bbZ^d$ (where $A$ is the diagonal matrix with $\alpha_i$ on the diagonal), in the sense that functions on this torus are functions $u$ on $\bbR^d$ which are almost periodic, that is for all $x\in\bbR^d$: $$e^{-2\pi i \lambda_i}u(x+(0,\ldots,0,\alpha_i,0,\ldots,0))=u(x),$$ for every $1\leq i\leq d$.\\
We can also write it in the following, more familiar way (using the definition of $\theta^\infty$):
$$\zeta_{EH}(s;\alpha_1,\ldots,\alpha_d;\lambda_1,\ldots,\lambda_d)=(2\pi)^{-2s}\sum_{K\in\bbZ^d}\Big(\Big(\frac{k_1+\lambda_1}{\alpha_1}\Big)^2+\ldots+\Big(\frac{k_d+\lambda_d}{\alpha_d}\Big)^2\Big)^{-s}.$$
\\

Thanks to the asymptotic behavior of $\theta^\infty$ (see Lemma \ref{asymp}) we can compute the analytic continuation of $\zeta_{EH}$ by writing $$\int_0^\infty\theta^\infty(t)t^{s-1}dt=\int_1^\infty\theta^\infty(t)t^{s-1}dt+\int_0^1\Big(\theta^\infty(t)-\frac{\prod_{i=1}^{d}\alpha_i}{(4\pi t)^\frac{d}{2}}\Big)t^{s-1}dt+\frac{\prod_{i=1}^{d}\alpha_i}{(4\pi)^{\frac{d}{2}}(s-\frac{d}{2})},$$
where both integrals on the right-hand side define entire functions of $s$. This expression then provides a meromorphic continuation for $\zeta_{EH}$ to $\bbC$ with a simple pole at $s=\frac{d}{2}$. Note that it also implies that $\zeta_{EH}(-n;\alpha_1,\ldots,\alpha_d;\lambda_1,\ldots,\lambda_d)=0$ for all integers $n\geq 0$. It is then possible to find an expression for the derivative at $s=0$, using the fact that $\frac{1}{\Gamma(s)}=s+O(s^2)$ when $s\longrightarrow 0^+$:
\begin{align}\label{derzero}
\zeta_{EH}'(0;\alpha_1,\ldots,\alpha_d;\lambda_1,\ldots,\lambda_d)&=\int_1^\infty\theta^\infty(t)\frac{dt}{t}+\int_0^1\Big(\theta^\infty(t)-\Big(\prod_{i=1}^{d}\alpha_i\Big)(4\pi t)^{-\frac{d}{2}}\Big)\frac{dt}{t}\nonumber\\&-\frac{2}{d}\Big(\prod_{i=1}^{d}\alpha_i\Big)(4\pi)^{-\frac{d}{2}}.
\end{align}
\begin{proposition}
We have $$\lim\limits_{n\rightarrow\infty}\calH_{d,n}=-\zeta'_{EH}(0;\alpha_1,\ldots,\alpha_d;\lambda_1,\ldots,\lambda_d).$$
\end{proposition}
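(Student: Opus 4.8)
The plan is to rescale time so that the discrete theta function converges to the continuous one, and then pass to the limit under the integral sign by dominated convergence. Writing $\Phi_n(t):=\Big(\prod_{i=1}^{d}a_i(n)\Big)e^{-2dt}I_0(2t)^d$ and performing the substitution $t=n^2u$ (under which $\frac{dt}{t}=\frac{du}{u}$) in the definition of $\calH_{d,n}$ gives
$$\calH_{d,n}=-\int_0^\infty\Big(\theta^{G_n}(n^2u)-\Phi_n(n^2u)\Big)\frac{du}{u}.$$
By Proposition \ref{rescaledtheta} we have $\theta^{G_n}(n^2u)\to\theta^\infty(u)$ for every $u>0$, while the classical asymptotics $I_0(x)\sim e^x/\sqrt{2\pi x}$ together with $a_i(n)/n\to\alpha_i$ give $\Phi_n(n^2u)\to\frac{\prod_{i=1}^d\alpha_i}{(4\pi u)^{d/2}}$ pointwise. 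Note that $\Phi_n(n^2u)$ is precisely the product of the $k=0$ winding terms in the Bessel expansions of the factors $\theta_i^{G_n}(n^2u)$, and that its limit is exactly the singular part of $\theta^\infty$ at $u=0$ identified in Lemma \ref{asymp}(b).

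I would then split the integral at $u=1$ and treat the two ranges separately. On $[1,\infty)$ Lemma \ref{expbound} provides the uniform bound $\theta^{G_n}(n^2u)\le e^{-cu}$ for $n\ge n_0$, and the large-argument Bessel estimate $e^{-2t}I_0(2t)\le C t^{-1/2}$ yields $\Phi_n(n^2u)\le C'u^{-d/2}$ for large $n$; both dominators are integrable against $\frac{du}{u}$ on $[1,\infty)$, so dominated convergence applies and
$$\int_1^\infty\Big(\theta^{G_n}(n^2u)-\Phi_n(n^2u)\Big)\frac{du}{u}\longrightarrow\int_1^\infty\theta^\infty(u)\frac{du}{u}-\frac{2}{d}\Big(\prod_{i=1}^{d}\alpha_i\Big)(4\pi)^{-\frac{d}{2}},$$
where the second term on the right comes from evaluating $\int_1^\infty\frac{\prod_i\alpha_i}{(4\pi u)^{d/2}}\frac{du}{u}$.

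The delicate range is $(0,1)$, and this is where I expect the main difficulty. Here both $\theta^{G_n}(n^2u)$ and $\Phi_n(n^2u)$ blow up like $u^{-d/2}$ as $u\to 0^+$, so one cannot dominate the two pieces separately and must control their difference uniformly in $n$. Since $\Phi_n(n^2u)$ is the product of the principal ($k=0$) winding terms, the difference $\theta^{G_n}(n^2u)-\Phi_n(n^2u)$ is governed by the nonzero winding terms $a_i(n)e^{-2n^2u}\sum_{k\neq 0}I_{ka_i(n)}(2n^2u)e^{-2\pi ik\lambda_i(n)}$. Using the bounds of Propositions 4.7 and 5.2 of \cite{CJK10} (valid here since our weights have modulus one, as already exploited in the proof of Proposition \ref{rescaledtheta}), these terms are bounded uniformly in $n$ by an expression of the type $Cu^{-d/2}e^{-c'/u}$, which is integrable against $\frac{du}{u}$ on $(0,1)$. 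Dominated convergence then gives
$$\int_0^1\Big(\theta^{G_n}(n^2u)-\Phi_n(n^2u)\Big)\frac{du}{u}\longrightarrow\int_0^1\Big(\theta^\infty(u)-\Big(\prod_{i=1}^{d}\alpha_i\Big)(4\pi u)^{-\frac{d}{2}}\Big)\frac{du}{u},$$
the integrand on the right being $O(e^{-c_3/u})$ by Lemma \ref{asymp}(b).

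Adding the two limits and comparing with the expression \eqref{derzero} for $\zeta_{EH}'(0)$ shows that $-\calH_{d,n}\to\zeta_{EH}'(0)$, that is $\calH_{d,n}\to-\zeta_{EH}'(0;\alpha_1,\ldots,\alpha_d;\lambda_1,\ldots,\lambda_d)$, as desired. The only real work lies in the uniform estimate on $(0,1)$; everything else is pointwise convergence plus routine domination.
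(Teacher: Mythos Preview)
Your proof is correct and follows essentially the same approach as the paper's: the same rescaling $t=n^2u$, the same split at $u=1$, dominated convergence via Lemma~\ref{expbound} on $[1,\infty)$, and the uniform control of the difference on $(0,1)$ by deferring to the estimates of \cite{CJK10} (the paper cites their Proposition~5.5 where you invoke 4.7 and 5.2, but these furnish the same bounds). The only cosmetic difference is that the paper treats the $\Phi_n$-piece on $[1,\infty)$ as a separate integral and cites \cite{CJK10} for its limit, whereas you evaluate $\int_1^\infty\frac{\prod_i\alpha_i}{(4\pi u)^{d/2}}\frac{du}{u}=\frac{2}{d}\prod_i\alpha_i(4\pi)^{-d/2}$ directly after dominated convergence.
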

\begin{proof}
We split the integral in the definition of $\calH_{d,n}$ (after changing variables) in the following way:
\begin{align*}
\calH_{d,n}&=\int_0^\infty\Big(\theta^{G_n}(n^2t)-\Big(\prod_{i=1}^{d}a_i(n)\Big)e^{-2dn^2t}I_0(2n^2t)^d\Big)\frac{dt}{t}\\&=\int_1^\infty\theta^{G_n}(n^2t)\frac{dt}{t}-\Big(\prod_{i=1}^{d}a_i(n)\Big)\int_1^\infty e^{-2dn^2t}I_0(2n^2t)^d\frac{dt}{t}\\&+\int_0^1\Big(\theta^{G_n}(n^2t)-\Big(\prod_{i=1}^{d}a_i(n)\Big)e^{-2dn^2t}I_0(2n^2t)^d\Big)\frac{dt}{t}.
\end{align*}
Thanks to the bound obtained in Lemma \ref{expbound} we can change the limit with the integration sign in the first integral, which then converges to $$\int_1^\infty\theta^\infty(t)\frac{dt}{t},$$ by Proposition \ref{rescaledtheta}.\\
The second term converges to $$\frac{2}{d}\Big(\prod_{i=1}^{d}\alpha_i\Big)(4\pi)^{-\frac{d}{2}},$$
as proved in \cite{CJK10}.\\
The third integral converges to $$\int_0^1\Big(\theta^\infty(t)-\Big(\prod_{i=1}^{d}\alpha_i\Big)(4\pi t)^{-\frac{d}{2}}\Big)\frac{dt}{t},$$
using again the same result of \cite{CJK10} (the bounds used in their Proposition 5.5 can be used here thanks to the fact that our weights have modulus one). We conclude by using (\ref{derzero}).
\end{proof}
Thus we have proved that $$\log\det\Delta=\Big(\prod_{i=1}^{d}a_i(n)\Big)\calI_d-\zeta_{EH}'(0;\alpha_1,\ldots,\alpha_d;\lambda_1,\ldots,\lambda_d)+o(1),$$ which is Theorem \ref{thm11}.\\

This should be compared with the main theorem in \cite{CJK10}. In particular we see that the bundle has an influence on the second term only, the leading term being independent of the weights. In our opinion this formula has several interesting aspects. First it has a combinatorial interpretation in that, as explained previously, the determinant of the bundle Laplacian counts (with weights) the number of cycle-rooted spanning forests. Second it contains geometric information by relating the determinant of the bundle Laplacian on a line bundle over discrete weighted tori on the one hand and over a continuous torus on the other. Third it may have some number theoretic value, due to the Kronecker-type formula in Theorem \ref{Kronecker}. Finally it seems that physicists are also interested in quantities like $\zeta_{EH}'(0;\alpha_1,\ldots,\alpha_d;\lambda_1,\ldots,\lambda_d)$, see \cite{Eli94}.
\begin{example}\label{exdim1}
\emph{If $d=1$ the graph is a cycle and there is exactly one cycle-rooted spanning forest. It is then elementary to compute $\det\Delta$ using Theorem \ref{Kenyon}. We obtain $$\det\Delta=4\sin^2(\pi\lambda(n))=4\sin^2(\pi\lambda)+o(1),$$ when $n\longrightarrow\infty$.\\
On the other hand, we have $$\zeta_{EH}(s;\alpha;\lambda)=\sum_{k\in\bbZ}\frac{1}{\Big(\frac{k+\lambda}{\alpha}\Big)^{2s}}=\alpha^{2s}(\zeta(2s,\lambda)+\zeta(2s,1-\lambda)),$$ where we write $\zeta(s,\lambda)$ for the standard Hurwitz zeta function. Using the formulas $\zeta(0,a)=\frac{1}{2}-a$, $\zeta'(0,a)=\log\Gamma(a)-\frac{1}{2}\log(2\pi)$ and $\Gamma(z)\Gamma(1-z)=\frac{\pi}{\sin(\pi z)}$, we see that $$\zeta_{EH}'(0;\alpha;\lambda)=-2(\log\sin(\pi\lambda)+\log(2)).$$ Since $c_1=0$ (see \cite{CJK10}) this small computation confirms Theorem \ref{thm11} in dimension one.\\ Note that going in the opposite direction, this computation together with Theorem \ref{thm11} consitutes a proof of the reflection formula for the gamma function $$\Gamma(z)\Gamma(1-z)=\frac{\pi}{\sin(\pi z)}.$$}
\end{example}

When the dimension is $d=2$ there is a nice formula for the derivative of the Epstein-Hurwitz zeta function at $s=0$, stated in Theorem \ref{Kronecker}. It is very similar to the Kronecker limit formula, which has important applications in number theory, see for example the paper by Chowla and Selberg \cite{CS67}. The classical formula (which corresponds to the case with no bundle) involves the Dedekind eta-function, which is ubiquitous in the theory of modular form. The infinite product in Theorem \ref{Kronecker} can be considered as a generalization of the latter. As far as we know, the Epstein-Hurwitz zeta function has received little attention in the literature, with the exception of the papers \cite{Eli94} and \cite{Ber71}. The expression in Theorem \ref{Kronecker} does not appear explicitly in \cite{Ber71} and the formula proposed in \cite{Eli94} does not make apparent the analogy with the classical Kronecker limit formula.
\begin{proof}[Theorem \ref{Kronecker}]
In this proof we write $\zeta_{EH}(s)$ for $\zeta_{EH}(s;\alpha_1,\alpha_2;\lambda_1,\lambda_2)$ to simplify the notations.\\
First we note that the infinite product on the right-hand side is always positive, since we assumed that $\lambda_i\notin\{ 0,1\}$ for some $i$, so that the expression on the right-hand side is well-defined.\\
We can use Theorem 2 in the paper \cite{Ber71} by Berndt to write the Epstein-Hurwitz zeta function as an infinite sum of modified Bessel functions. There are three different cases that we have to treat separately: $\lambda_1\notin \{0,1\}$ and $\lambda_2\notin \{0,1\}$, $\lambda_1\in\{0,1\}$ and $\lambda_2\notin \{0,1\}$, $\lambda_1\notin \{0,1\}$ and $\lambda_2\in\{0,1\}$. We will explain the computations for the first case, the other ones being similar. So suppose $\lambda_1,\lambda_2\notin \{0,1\}$. Then, by Theorem 2 in \cite{Ber71} we have
\begin{align*}
(4\pi^3\alpha_1\alpha_2)^{-s}\Gamma(s)\zeta_{EH}(s)&=\Big(\frac{\alpha_1}{\alpha_2}\Big)^{1-s}\pi^{\frac{1}{2}-s}\Gamma(s-1/2)\Big(\zeta(2s-1,\lambda_2)+\zeta(2s-1,1-\lambda_2)\Big)\\&+2\sqrt{\frac{\alpha_1}{\alpha_2}}\sum_{m,n \atop m\neq 0}e^{-2\pi im\lambda_1}\Big|\frac{m}{n+\lambda_2}\Big|^{s-\frac{1}{2}}K_{s-\frac{1}{2}}\Big(2\pi\frac{\alpha_1}{\alpha_2}|m||n+\lambda_2|\Big),
\end{align*}
where $K_{s-\frac{1}{2}}$ is a modified Bessel function.\\
Alternatively, we can also start with the paper by Terras \cite{Ter73} and adapt the computations to our function $\zeta_{EH}$ to obtain the same representation in terms of Bessel functions.\\
Then we develop around $s=0$ using the fact that $\frac{1}{\Gamma(s)}=s+O(s^2)$ and the identity $K_{-\frac{1}{2}}(z)=\frac{1}{\sqrt{2}}\sqrt{\frac{\pi}{z}}e^{-z}$. Since the coefficient of the linear term in $s$ is the derivative at $s=0$, this leads to
\begin{align*}
\zeta_{EH}'(0)&=\frac{\alpha_1}{\alpha_2}\sqrt{\pi}\Gamma(-1/2)\Big(\zeta(-1,\lambda_2)+\zeta(-1,1-\lambda_2)\Big)+\sum_{m,n \atop m\neq 0}\frac{e^{-2\pi i m\lambda_1}}{|m|}e^{-2\pi\frac{\alpha_1}{\alpha_2}|m||n+\lambda_2|}\\&=2\pi\frac{\alpha_1}{\alpha_2}B_2(\lambda_2)+\sum_{n\in\bbZ}\sum_{m\geq 1}\Big(\frac{e^{2\pi i m\lambda_1}}{m}e^{-2\pi\frac{\alpha_1}{\alpha_2}m|n+\lambda_2|}+\overline{\frac{e^{2\pi i m\lambda_1}}{m}e^{-2\pi\frac{\alpha_1}{\alpha_2}m|n+\lambda_2|}}\Big)\\&=2\pi\frac{\alpha_1}{\alpha_2}B_2(\lambda_2)-\sum_{n\in\bbZ}\Big(\log\Big(1-e^{2\pi i\lambda_1-2\pi\frac{\alpha_1}{\alpha_2}|n+\lambda_2|}\Big)+\log\Big(\overline{1-e^{2\pi i\lambda_1-2\pi\frac{\alpha_1}{\alpha_2}|n+\lambda_2|}}\Big)\Big)\\&=2\pi\frac{\alpha_1}{\alpha_2}B_2(\lambda_2)-2\log\prod_{n\in\bbZ}\Big|1-e^{2\pi i\lambda_1}e^{-2\pi\frac{\alpha_1}{\alpha_2}|n+\lambda_2|}\Big|,
\end{align*}
where we used the special value $\zeta(-1,\lambda)=-\frac{B_{2}(\lambda)}{2}$, with $B_{2}(\lambda)=\lambda^2-\lambda+\frac{1}{6}$ the second Bernoulli polynomial. 
\end{proof}
With the same kind of computation we could in fact write a more general Kronecker-type formula for Epstein-Hurwitz zeta functions having non-diagonal quadratic form, see \cite{Ter73} and \cite{Ber71}.\\
An amusing consequence of Theorem \ref{Kronecker} is the following.
\begin{corollary}
The following identity is true: $$\frac{\displaystyle\prod_{n\geq 1}(1+e^{-2n\pi})}{\displaystyle\prod_{n\geq 0}(1-e^{-(2n+1)\pi})}=\frac{e^{\pi/8}}{\sqrt{2}}.$$
\end{corollary}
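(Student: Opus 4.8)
The plan is to read the identity off from Theorem~\ref{Kronecker} by feeding it two judiciously chosen specializations and exploiting a symmetry that is invisible on the right-hand side of that formula but manifest on the left. Indeed, the defining series
$$\zeta_{EH}(s;\alpha_1,\alpha_2;\lambda_1,\lambda_2)=(2\pi)^{-2s}\sum_{(k_1,k_2)\in\bbZ^2}\Big(\Big(\tfrac{k_1+\lambda_1}{\alpha_1}\Big)^2+\Big(\tfrac{k_2+\lambda_2}{\alpha_2}\Big)^2\Big)^{-s}$$
is invariant under simultaneously swapping $(\alpha_1,\lambda_1)\leftrightarrow(\alpha_2,\lambda_2)$ (relabel the summation indices), hence so is $\zeta_{EH}'(0)$. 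Taking $\alpha_1=\alpha_2$ this yields the key relation $\zeta_{EH}'(0;\alpha,\alpha;\lambda_1,\lambda_2)=\zeta_{EH}'(0;\alpha,\alpha;\lambda_2,\lambda_1)$, which the closed form of Theorem~\ref{Kronecker} does not reflect termwise.

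First I would apply this with the two symmetric data $(\lambda_1,\lambda_2)=(\tfrac12,0)$ and $(\lambda_1,\lambda_2)=(0,\tfrac12)$ and $\alpha_1/\alpha_2=1$, so that the two resulting values of $\zeta_{EH}'(0)$ coincide by the symmetry above. On the other hand, Theorem~\ref{Kronecker} computes each of them through the product $\prod_{n\in\bbZ}\big|1-e^{2\pi i\lambda_1}e^{-2\pi|n+\lambda_2|}\big|$, and the next step is to evaluate this product in each case. For $(\lambda_1,\lambda_2)=(\tfrac12,0)$ one has $e^{2\pi i\lambda_1}=-1$ and $|n+\lambda_2|=|n|$, so the $n=0$ term contributes $2$ while each value $k\geq1$ occurs twice, giving $2\prod_{k\geq1}(1+e^{-2\pi k})^2$. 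For $(\lambda_1,\lambda_2)=(0,\tfrac12)$ one has $e^{2\pi i\lambda_1}=1$ and $|n+\tfrac12|$ runs twice through each $k+\tfrac12$, $k\geq0$, giving $\prod_{k\geq0}(1-e^{-(2k+1)\pi})^2$. (Both specializations are legitimate since in each case one of the $\lambda_i$ equals $\tfrac12\notin\{0,1\}$.)

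Plugging these into Theorem~\ref{Kronecker}, together with $B_2(0)=\tfrac16$ and $B_2(\tfrac12)=-\tfrac1{12}$, the equality of the two values of $\zeta_{EH}'(0)$ becomes
$$\frac{\pi}{3}-2\log2-4\log\prod_{k\geq1}(1+e^{-2\pi k})=-\frac{\pi}{6}-4\log\prod_{k\geq0}(1-e^{-(2k+1)\pi}).$$
Rearranging, the Bernoulli terms combine to $\tfrac{\pi}{2}$; dividing by $4$ leaves $\tfrac{\pi}{8}-\tfrac12\log2$ on one side and $\log\big(\prod_{k\geq1}(1+e^{-2\pi k})/\prod_{k\geq0}(1-e^{-(2k+1)\pi})\big)$ on the other, and exponentiating yields the claimed value $e^{\pi/8}/\sqrt2$.

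The computation is essentially forced once the symmetry is spotted, so there is no serious analytic obstacle; the only place demanding care is the bookkeeping in the two infinite products, namely correctly accounting for the multiplicity-two occurrences of each $|n+\lambda_2|$ as $n$ ranges over $\bbZ$, and the isolated multiplicity-one term $n=0$ in the case $\lambda_2=0$. A brief convergence remark is in order as well: both products converge absolutely, so splitting the logarithms and reindexing the sums over $n\in\bbZ$ into sums over $k\geq0$ (or $k\geq1$) are legitimate.
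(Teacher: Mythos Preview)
Your proof is correct and follows exactly the same approach as the paper: exploit the symmetry $\zeta_{EH}'(0;\alpha,\alpha;\lambda_1,\lambda_2)=\zeta_{EH}'(0;\alpha,\alpha;\lambda_2,\lambda_1)$, apply Theorem~\ref{Kronecker} with $\lambda_1=0$, $\lambda_2=\tfrac12$, and read off the identity. The only difference is that you carry out the product bookkeeping and the Bernoulli arithmetic explicitly, whereas the paper leaves these to the reader.
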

\begin{proof}
If $\alpha_1=\alpha_2$ the function $\zeta_{EH}$ is symmetric in $\lambda_1$ and $\lambda_2$ by definition. By Theorem \ref{Kronecker} this implies that $$2\pi B_2(\lambda_1)-2\log\prod_{n\in\bbZ}\Big|1-e^{2\pi i\lambda_2}e^{-2\pi|n+\lambda_1|}\Big|=2\pi B_2(\lambda_2)-2\log\prod_{n\in\bbZ}\Big|1-e^{2\pi i\lambda_1}e^{-2\pi|n+\lambda_2|}\Big|.$$
Taking $\lambda_1=0$ and $\lambda_2=\frac{1}{2}$ yields the result.
\end{proof}
Obviously we could write a whole family of similar identities, using other values for $\lambda_1$ and $\lambda_2$. It is likely that these formulas, or at least some of them, can be derived from the theory of Jacobi theta functions.\\

An interesting (and simple) choice of bundle is the following. We choose $d$ positive integers $m_1,\ldots,m_d$ and define, for each $i\in\{1,\ldots,d\}$, $w_{i,j}=1$ for all $0\leq j\leq m_i-2$ and $w_{i,m_i-1}=e^{2\pi i \lambda_i}=:z_i$. In words, we consider the $d$-fold cartesian product of the cyclic graphs $\bbZ/m_i\bbZ$ (as explained in Section 2), where, in each cycle, all the edges have trivial weight $1$ except for one edge which have weight $z_i=e^{2\pi i\lambda_i}$. One can think of this graph as a discrete $d$-dimensional torus constructed as follows: start with a $d$-dimensional cubic grid of size $m_1\times\ldots\times m_d$ with all edges having weight $1$ and add edges linking opposite boundaries, according to toric boundary conditions. For each pair of opposite boundaries, the corresponding edges all have weight $e^{2\pi i\lambda_i}$. For this example only, we allow all the weights to be trivial (we will come back to our earlier convention in Section 5).\\
Write $$F_{(m_1,\ldots,m_d)}(z_1,\ldots,z_d):=\det\Delta$$ for the determinant of the bundle Laplacian on the graph we just defined, if there exists $i$ such that $z_i\neq 1$. If $z_i=1$ for all $i$ write $$F_{(m_1,\ldots,m_d)}(z_1,\ldots,z_d):={\det}^*\Delta$$ for the product of the non-zero eigenvalues of the standard Laplacian.
We record the following easy result.
\begin{proposition}\label{prodform}
Let $m_1,\ldots,m_d$ and $n$ be positive integers.\\ For any choice of complex numbers $z_1,\ldots,z_d$ of modulus one, we have \begin{equation*}
F_{(m_1n,\ldots,m_dn)}(z_1,\ldots,z_d)= \prod_{u_1^{m_1}=z_1}\cdots\prod_{u_d^{m_d}=z_d}F_{(n,\ldots,n)}(u_1,\dots,u_d).\end{equation*}
\end{proposition}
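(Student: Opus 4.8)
The plan is to reduce both sides to the explicit list of Laplace eigenvalues furnished by Remark \ref{eigen} and then to recognize them as the same product over a multi-index, reindexed by Euclidean division. In the graph defining $F_{(m_1 n,\ldots,m_d n)}(z_1,\ldots,z_d)$ the monodromy around the $i$-th cyclic factor equals $z_i=e^{2\pi i\lambda_i}$ with $\lambda_i\in[0,1)$, since exactly one edge of that cycle carries weight $z_i$ and the rest carry $1$. Hence Remark \ref{eigen} gives
$$F_{(m_1 n,\ldots,m_d n)}(z_1,\ldots,z_d)=\prod_{j_1=0}^{m_1 n-1}\cdots\prod_{j_d=0}^{m_d n-1}\sum_{i=1}^d 4\sin^2\Big(\frac{\pi(j_i+\lambda_i)}{m_i n}\Big),$$
with the convention that a vanishing factor is omitted. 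On the other side, the equation $u_i^{m_i}=z_i$ has the $m_i$ solutions $u_i=e^{2\pi i\mu_i}$ with $\mu_i=(\lambda_i+\ell_i)/m_i$, $\ell_i\in\{0,\ldots,m_i-1\}$; these are distinct modulo $1$, and $F_{(n,\ldots,n)}(u_1,\ldots,u_d)$ depends only on the monodromies $u_i$ and not on the representatives $\mu_i$, since shifting $\mu_i$ by an integer just permutes the index $k_i$. Applying Remark \ref{eigen} to the $(n,\ldots,n)$-torus gives
$$F_{(n,\ldots,n)}(u_1,\ldots,u_d)=\prod_{k_1=0}^{n-1}\cdots\prod_{k_d=0}^{n-1}\sum_{i=1}^d 4\sin^2\Big(\frac{\pi(k_i+\mu_i)}{n}\Big).$$

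The heart of the argument is a reindexing performed simultaneously in all coordinates. For each $i$ the map $(k_i,\ell_i)\mapsto j_i:=m_i k_i+\ell_i$ is a bijection from $\{0,\ldots,n-1\}\times\{0,\ldots,m_i-1\}$ onto $\{0,\ldots,m_i n-1\}$, and under it one has the identity
$$\frac{k_i+\mu_i}{n}=\frac{m_i k_i+\ell_i+\lambda_i}{m_i n}=\frac{j_i+\lambda_i}{m_i n}.$$
Thus, collecting the product over the roots $(u_1,\ldots,u_d)$, i.e. over $(\ell_1,\ldots,\ell_d)$, together with the inner product over $(k_1,\ldots,k_d)$ into a single product over the multi-index $(j_1,\ldots,j_d)$ transforms the right-hand side of the proposition into the displayed product for $F_{(m_1 n,\ldots,m_d n)}(z_1,\ldots,z_d)$. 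It is important that, since each eigenvalue is a \emph{sum} over $i$, neither determinant factors through the dimensions, so the reindexing cannot be carried out one coordinate at a time in isolation but only on the full multi-index at once.

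The one genuine subtlety, and the step I would check most carefully, is the bookkeeping of the single possibly vanishing eigenvalue, so that the two conventions concealed in the symbol $F$ (ordinary determinant versus ${\det}^{*}$) agree on both sides. A summand $4\sin^2(\pi(j_i+\lambda_i)/(m_i n))$ vanishes exactly when $j_i=\lambda_i=0$, so the total eigenvalue vanishes only when every $z_i=1$, and then only at the index $(0,\ldots,0)$. Under the bijection this corresponds to $k_i=\ell_i=0$ for all $i$, hence to the unique factor on the right with all $u_i=1$, where $F_{(n,\ldots,n)}$ is also defined via ${\det}^{*}$ and discards exactly the same zero. Conversely, if some $z_i\neq 1$ then no eigenvalue vanishes on either side and no factor on the right has all $u_i=1$, so every occurrence of $F$ is an honest determinant. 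Verifying this case distinction shows that the omitted-factor conventions match, which completes the proof.
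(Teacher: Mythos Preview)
Your proof is correct and follows essentially the same route as the paper: both arguments use the explicit eigenvalue list from Remark \ref{eigen}, identify the $m_i$-th roots of $z_i$ as $e^{2\pi i(\lambda_i+\ell_i)/m_i}$, and reindex via the Euclidean-division bijection $(k_i,\ell_i)\mapsto m_ik_i+\ell_i$. Your treatment of the ${\det}^{*}$ bookkeeping is in fact more explicit than the paper's, which simply remarks that ``almost the same computation works'' when all $z_i=1$.
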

\begin{proof}
Suppose first that not all $z_i$ are equal to $1$. Since the $m_i$-th roots of $z_i$ are given by $e^{2\pi i\frac{k+\lambda_i}{m_i}}$ for $0\leq k \leq m_i-1$ and in view of Remark \ref{eigen}, the logarithm of the right-hand side is equal to
$$
\sum_{k_1=0}^{m_1-1}\ldots\sum_{k_d=0}^{m_d-1}\sum_{j_1=0}^{n-1}\ldots\sum_{j_d=0}^{n-1}\log\Big(4\sin^2\Big(\frac{\pi}{n}(j_1+\\\frac{k_1+\lambda_1}{m_1}\Big)+\ldots+4\sin^2\Big(\frac{\pi}{n}(j_d+\frac{k_d+\lambda_d}{m_d})\Big)\Big),
$$
which can be rewritten as $$\sum_{j_1=0}^{m_1n-1}\ldots\sum_{j_d=0}^{m_dn-1}\log\Big(4\sin^2\Big(\frac{\pi}{m_1n}(j_1+\lambda_1)\Big)+\ldots+4\sin^2\Big(\frac{\pi}{m_dn}(j_d+\lambda_d)\Big)\Big),$$ which is $\log(LHS)$.\\
If $z_i=1$ for all $i$ almost the same computation works, taking into account the slightly different meaning of $F$ in that case.
\end{proof}
For instance, if we take $d=2$, $m_1=m_2=2$ and $z_1=z_2=1$ we get $$F_{(2n,2n)}(1,1)=F_{(n,n)}(1,1)F_{(n,n)}(1,-1)F_{(n,n)}(-1,1)F_{(n,n)}(-1,-1).$$
Since in that situation all cycles have monodromy $1$ or $-1$, Theorem \ref{Kenyon} tells us that $F_{(n,n)}(1,-1)$, $F_{(n,n)}(-1,1)$ and $F_{(n,n)}(-1,-1)$ are all integers multiple of $4$. We deduce that the number of spanning trees in the $2n\times 2n$ discrete torus is an integer multiple of the number of spanning trees in the $n\times n$ discrete torus (and the multiplicative constant is itself a multiple of $4$ determined by the cycle-rooted spanning forests in the $n\times n$ torus).\\
In dimension $2$, a very similar formula holds for the characteristic polynomial of the dimer model of any toroidal graph, see for example \cite{KOS06}. It would be interesting to investigate the case of other graphs. For which graphs do such a product formula for the bundle Laplacian holds?
\end{section}

\begin{section}{Asymptotics of zeta functions}
Now we give an asymptotic result about the spectral zeta function of $G_n$, in the same spirit as in \cite{FK16}.
\begin{definition}
For $\Re(s)>0$, the \emph{spectral zeta function} associated to the graph $G_n$ with bundle defined above is defined by $$\zeta_{G_n}(s)=\frac{1}{\Gamma(s)}\int_0^\infty\theta^{G_n}(t)t^{s-1}dt.$$
\end{definition}
In view of the asymptotics of the integrand obtained in Lemma \ref{asymp}, this integral is convergent in the domain specified in the definition. Note that, in order to simplify the notation, we do not write the dependence on the various parameters introduced earlier. More precisely, $\theta^{G_n}$ depends on the dimension $d$, the integers $a_i(n)$ and the weights $w_{i,j}$ and so does $\zeta_{G_n}$.
\\

In fact $\zeta_{G_n}$ is entire since Proposition \ref{theta} implies that \begin{equation*}
\zeta_{G_n}(s)=\frac{1}{4^s}\sum_K\frac{1}{\Big(\sin^2\Big(\frac{\pi(k_1+\lambda_1)}{a_1(n)}\Big)+\ldots+\sin^2\Big(\frac{\pi(k_d+\lambda_d)}{a_d(n)}\Big)\Big)^s},\end{equation*} where the index $K$ runs over all vectors $(k_1,\ldots,k_d)$, with $0\leq k_j\leq a_j(n)-1$.
\\

The spectral zeta function of $\bbZ^d$ is given by $$\zeta_{\bbZ^d}(s)=\frac{1}{\Gamma(s)}\int_0^\infty e^{-2dt}I_0(2t)^dt^{s-1}dt$$ for $0<\Re(s)<\frac{d}{2}$ and admits a meromorphic continuation with simple poles at $s=m+\frac{d}{2}$ ($m\geq 0$), see \cite{FK16}.\\

We are now ready to prove Theorem \ref{zeta}.
\begin{proof}[Theorem \ref{zeta}]
The proof is practically the same as in \cite{FK16}, using results from \cite{CJK10}. Recall that $$\theta^{G_n}(n^2t)\longrightarrow\theta^\infty(t)$$ when $n\longrightarrow\infty$ for all $t>0$, by Proposition \ref{rescaledtheta}. Next we write, for $0<Re(s)<\frac{d}{2}$, \begin{equation}\label{S1S2}
\zeta_{G_n}(s)=\frac{n^{2s}}{\Gamma(s)}\int_0^\infty\theta^{G_n}(n^2t)t^{s-1}dt=\frac{n^{2s}}{\Gamma(s)}\Big(S_1(n)+S_2(n)+S_3(n)+S_4(n)+S_5(n)\Big),\end{equation} where $$S_1(n):=\int_1^\infty\theta^{G_n}(n^2t)t^{s-1}dt,$$ $$S_2(n):=\int_0^1\Big(\theta^{G_n}(n^2t)-\Big(\prod_{i=1}^{d}a_i(n)\Big)e^{-2dn^2t}I_0(2n^2t)^d\Big)t^{s-1}dt,$$ $$S_3(n):=n^{-2s}\Big(\prod_{i=1}^{d}a_i(n)\Big)\Gamma(s)\zeta_{\bbZ^d}(s),$$ $$S_4(n):=-n^{-2s}\Big(\prod_{i=1}^{d}a_i(n)\Big)\int_{n^2}^{\infty}\Big(e^{-2dt}I_0(2t)^d-(4\pi t)^{-\frac{d}{2}}\Big)t^{s-1}dt$$ and $$S_5(n)=-n^{-2s}\Big(\prod_{i=1}^{d}a_i(n)\Big)\int_{n^2}^{\infty}(4\pi t)^{-\frac{d}{2}}t^{s-1}dt=\Big(\prod_{i=1}^{d}a_i(n)\Big)\frac{n^{-d}}{(4\pi)^\frac{d}{2}(s-\frac{d}{2})}.$$
Note that the equality (\ref{S1S2}) is in fact valid for $-\min a_i(n)<\Re(s)<\frac{d}{2}+1$, due to the analytic continuation of $\zeta_{\bbZ^d}$, Lemma \ref{asymp} and the asymptotic behavior of the modified Bessel function. Letting $n$ go to infinity, $S_1(n)$, $S_2(n)$ and $S_5(n)$ combine to give the second term in the asymptotics, for all $s\neq\frac{d}{2}$ (and a term $o(n^{2s})$). The first term is $S_3(n)$ and $S_4(n)$ contributes to the error term, as can be seen from the asymptotic behavior of $I_0$, if $\Re(s)<\frac{d}{2}+1$ \emph{a priori}. But we can use more terms from the asymptotics of $I_0$ at infinity to split the integral in $S_4(n)$ further, so that the validity of the equality (\ref{S1S2}) and, consequently, of the final asymptotics, extends to any $s$ such that $s\neq m+\frac{d}{2}$.
\end{proof}
This is in principle a more general result, because the function $\zeta_{G_n}$ contains a lot of information about the Laplace spectrum of $G_n$. In particular, if only the error term would be a bit smaller, say for example $o(\frac{n^{2s}}{\log n})$, we would obtain Theorem \ref{thm11} as a simple consequence by taking the derivative at $s=0$ and using $\zeta_{EH}(0;\alpha_1,\ldots,\alpha_d;\lambda_1,\ldots,\lambda_d)=0$ as observed above.
Indeed, the derivative at $s=0$ of $\zeta_{G_n}$ is equal to $-\log\det\Delta$, because $\zeta_{G_n}(s)=\sum\frac{1}{\lambda_j^s}$ where the $\lambda_j$ run over all the eigenvalues of $\Delta$. Moreover, if we take the derivative at $s=0$ in the similar asymptotic formula obtained in \cite{FK16} and compare with the results in \cite{CJK10}, we see that $-\zeta_{\bbZ^d}'(0)=c_d$.
\\

\bibliographystyle{plain}
\bibliography{bib-bundle}

\noindent Fabien Friedli 

\noindent Section de mathématiques \\
Université de Genève

\noindent 2-4 Rue du Lièvre\\
Case Postale 64

\noindent 1211 Genève 4, Suisse 

\noindent e-mail: fabien.friedli@unige.ch
\end{section}
\end{document}